\newcommand{\TheTitle}{Bump attractors and waves in networks of leaky integrate-and-fire
neurons}
\newcommand{\TheShortTitle}{Bumps and waves in spiking networks}
\newcommand{\TheAuthors}{D. Avitabile, J.~L.~Davis, K.~C.~A.~Wedgwood}
\headers{\TheShortTitle}{\TheAuthors}
\title{{\TheTitle}}
\author{
  Daniele Avitabile
  \thanks{Department of Mathematics, Vrije Universiteit Amsterdam. 
    Amsterdam Neuroscience, Systems \& Network Neuroscience.
    MathNeuro Team, Inria Sophia Antipolis.
\email{d.avitabile@vu.nl},
    \url{https://www.danieleavitabile.com}.}
  \and
  Joshua L.~Davis
  \thanks{
    Defence Science and Technology Laboratory,
    Cyber and Information Systems Division.}
  \and
  Kyle C.~A.~Wedgwood
  \thanks{
    Living Systems Institute, College of Engineering, Mathematics and Physical Sciences.}
}
\renewcommand{\phi}{\varphi}
\DeclareMathOperator{\diag}{diag}
\DeclareMathOperator{\real}{Re}
\newcommand{\iunit}{\mathrm{i}\mkern1mu}
\newcommand{\RSet}{\mathbb{R}}
\newcommand{\DSet}{\mathbb{D}}
\newcommand{\NSet}{\mathbb{N}}
\newcommand{\ZSet}{\mathbb{Z}}
\newcommand{\SSet}{\mathbb{S}}
\newcommand{\FSet}{\mathbb{F}}
\newcommand{\CSet}{\mathbb{C}}
\newcommand\blank{{\mkern 2mu\cdot\mkern 2mu}}
\newcommand{\eps}{\varepsilon}
\newcommand{\tw}[1]{\ensuremath{\textrm{TW}_{#1}}}
\newcommand{\hb}[1]{\ensuremath{\textrm{HB}_{#1}}}
\crefname{hypothesis}{Hypothesis}{Hypotheses}
\crefname{problem}{Problem}{Problems}
\crefname{remark}{Remark}{Remarks}
\newcommand{\otoprule}{\midrule[\heavyrulewidth]}
\begin{document}
\maketitle

\begin{abstract}
  Bump attractors are wandering localised patterns observed in in vivo experiments of
  spatially-extended neurobiological networks. They are important for the brain's
  navigational system and specific memory tasks. A bump attractor is characterised by a core
  in which neurons fire frequently,
  while those away from the core do not fire. These structures have been found in
  simulations of spiking neural networks, but we do not yet have a mathematical
  understanding of their existence because a rigorous analysis of the nonsmooth
  networks that support them is challenging.
  We uncover a relationship between bump attractors and travelling waves in a
  classical network of excitable, leaky integrate-and-fire neurons. This relationship
  bears strong similarities to the one between complex spatiotemporal patterns and
  waves at the onset of pipe turbulence. 
  Waves in the spiking network are determined by a firing set, that is, the
  collection of times at which neurons reach a threshold and fire as the wave
  propagates.
  We define and study analytical properties of
  the
  \textit{voltage
  mapping}, an operator transforming a solution's firing set into its spatiotemporal
  profile. This operator
  allows us to construct localised travelling
  waves with an arbitrary number of spikes at the core, and to study
  their linear stability. 
%
  A homogeneous ``laminar" state exists in the network, and it is
  linearly stable for all values of the principal control parameter. Sufficiently
  wide disturbances to the homogeneous state elicit the bump attractor. 
  We show that
  one can construct waves with a seemingly arbitrary number
  of spikes at the core; the higher the number of spikes, the slower the
  wave, and the more its profile resembles a stationary bump. 
  \color{black}
  As in the
  fluid-dynamical analogy,
  such waves coexist with the homogeneous
  state, and the solution branches to which they belong are
  disconnected from the laminar state; 
  \color{black}
  we provide evidence that the
  dynamics of the bump attractor displays echoes of unstable waves, which form
  its building blocks.
\end{abstract}

\section{Introduction}\label{sec:intro}
\begin{figure}\label{fig:bumpSummary}
  \centering
  \includegraphics{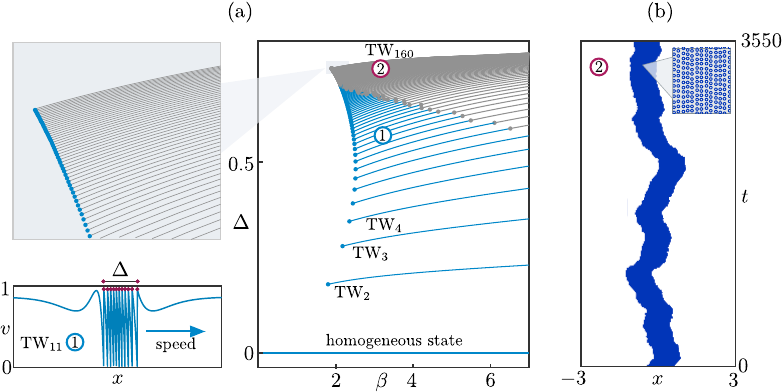}
  \caption{(a) Bifurcation diagram of travelling waves in a continuous
    integrate-and-fire model. (b) Bump attractor in a discrete integrate-and-fire
    model with 5000 neurons (dots represent neuronal firing events, \color{black}
    visible in the zoomed inset\color{black}). Model descriptions and
    parameters will be
    given later in \cref{sec:discreteModel,tab:parameters,fig:exampleBumps,fig:exampleWaves}). 
    The bifurcation diagram in (a) shows selected branches of stable (blue) and unstable (grey)
    travelling waves, in the continuation parameter $\beta$, that is, the timescale
    at which neuron process incoming currents. Waves are measured using their width
    $\Delta$ and are indexed by the number of advected spikes. The profile of
    \tw{11}, a representative wave with $11$ spikes, is shown. A large number of
    waves (\tw{2}--\tw{160} in the picture, but many more unstable branches are
    omitted) coexist with the trivial homogeneous
    state, which is the only steady state in the model, and which is stable for all
    values of $\beta$. Narrow waves are stable, with small basins of attraction.
    Sufficiently large, localised disturbances of the homogenous state lead to the
    formation of a bump with a characteristic width: the bump in (b) is marked as (2)
    in (a). The region in parameter space where bumps are
    observed is crowded with unstable travelling waves, with a large number of
    spikes, and a width comparable to the one of the bump. Branches of waves are
    detached from the homogeneous state; they originate at critical points called
    grazing points (blue dots in (a)); waves that are born stable become unstable at
    oscillatory bifurcations (grey dots in (a)).
}
\end{figure}
Understanding how networks of coupled, excitable units generate collective patterns is a
central question in the life sciences and, more generally, in applied mathematics.
In particular, the study of network models
is ingrained in neuroscience applications, as they provide a natural way to describe
the interaction of neurons within a population, or of neural populations within the
cortex. In the past decades, a large body of work in mathematical neuroscience has
addressed the development and analysis of neurobiological networks, with the view of studying the
origin of large-scale brain
activity~\cite{ermentrout2010mathematical,bressloff2014waves,coombes2014neural}, and
mapping single-cell and population parameters to experimental observations, 
including in vivo and in vitro cortical
waves~\cite{Richardson:2005cs,Huang:2004kw,GonzalezRamirez:2015gk},
electroencephalogram recordings~\cite{SteynRoss:2003ep}, and patterns in the visual
cortex~\cite{Camperi:1998ji}.
%
%

This paper presents a novel mathematical characterisation of a prominent example of
spatiotemporal pattern in neuroscience applications, and draws an analogy inspired by
recent progress in the fluid-dynamics literature on transition to turbulence in a
pipe~\cite{barkley2016theoretical}. We focus on the so-called \textit{bump
attractor}\footnote{In the neuroscience
  literature the term \textit{bump attractor} refers sometimes to a network producing a
localised pattern, as opposed to the pattern itself. Similarly, some authors use
\textit{ring attractor} for a network with ring topology, generating a localised
activity bump. Here, we use these terms to refer to patterns, following the standard
convention in the dynamical systems literature.}, a localised pattern of
neural activity observable in experiments and numerical simulations of
spatially-extended, neurobiological networks
\cite{redish1996coupled,zhang1996representation}. Bump attractors have been
associated to \textit{working
memory}, the temporary storage of information in the brain, and experimental evidence
supporting their existence has been found in the navigational systems of
rats~\cite{knierim2012attractor} and flies
\cite{kim2017ring,turner2017angular}, and in oculomotor responses of
monkeys~\cite{wimmer2014bump}.

In a bump attractor, the neural activity is localised around a particular position in the
network (see \cref{fig:bumpSummary}(b)) which may encode, for instance, the animal's head
position. Bumps are elicited by transient localised stimuli, such as visual cues at
specific locations, but are sustained
autonomously by the network once the stimulus is removed (the network dynamics is
\emph{attracted} to the bump). These coherent structures display a characteristic
wandering motion, and may exhibit discontinuous jumps if the impinging stimulus
undergoes sudden spatial shifts \cite{kim2017ring}.

\subsection{Model descriptions} Mathematical neuroscience has a long-standing
fascination with localised bumps of
activity. Neural field models, which represent the cortex as a continuum, were
introduced in the 1970s, and spatially-localised solutions to these models appeared 
already in seminal papers on the subject, by Wilson and
Cowan~\cite{wilson1973mathematical}, and by Amari~\cite{amari1977dynamics}. Since
then, many authors have studied localised solutions in neural fields, addressed the
derivation of neural field equations from first principles, their relevance to a
wide variety of neural phenomena, and their rigorous mathematical treatment. We refer
the reader to \cite{ermentrout2010mathematical,bressloff2014waves,coombes2014neural}
for exhaustive introductions on this topic.

Neural fields are integro-differential equations which model the cortex as an
excitable, spatially-extended medium. Mathematical mechanisms for pattern
formation in neural fields are similar to the ones found in other nonlinear media,
such as reaction-diffusion systems, albeit their analysis requires some
modifications because these models contain nonlocal operators. Stationary bumps form via
instabilities of the homogeneous steady state, and their profile depends strongly on the
coupling, which typically involves excitation on short spatial scales, and inhibition
on longer
scales~\cite{ermentrout2010mathematical,bressloff2014waves,coombes2014neural}. Neural
fields support
travelling bump solutions, as well as wandering bumps. The latter are obtained in
neural fields that incorporate stochastic terms deriving, for instance, from noisy
currents~\cite{kilpatrick2013wandering,maclaurin2019determination}.

Neural fields are heuristic, coarse grained models, hence they bypass microscopic
details that are important in bump attractors. For instance, the
neural firing rate, which is an emergent neural property and an observable in the
bump attractor experiments, is a prescribed feature in neural fields, hardwired in the model
through an ad-hoc firing-rate function. On the other hand, numerical simulations
of large networks of Hodgkin--Huxley-type neurons with realistic biological
details can display emergent neural firing, but their mathematical treatment is
challenging, and still under development~\cite{Folias.2010,baladron2012mean}.

\emph{Spiking neural networks} are intermediate, bottom-up models which couple
neurons with idealised dynamics. The
salient feature of spiking models is that the firing of a neuron is described as an
event, and no attempt is made to model the temporal evolution of the membrane
potential during and after the
spike~\cite{izhikevich2007dynamical,gerstner2014neuronal,bressloff2014waves}. Spiking
neural networks are specified by 3 main ingredients: (i) an ordinary differential
equation (ODE) for the membrane potential of each neuron; (ii) rules to define the
occurrence and effects of a spike; (iii) the network coupling. 

Since the introduction of the first single-cell spiking model by
Lapicque~\cite{lapicque1907recherches}, the so-called \emph{leaky integrate-and-fire
model}, more realistic variants have been proposed, and spiking neural networks
have become a widely adopted tool in theoretical
neuroscience~\cite{tuckwell1988introduction,Burkitt:2006,gerstner2014neuronal}.
In specific spiking models, analytical progress has been made for single neurons
and spatially-independent networks using coordinate
transformations~\cite{ermentrout1986parabolic,Mirollo:2006ft}, dimension
reduction~\cite{luke2013complete,montbrio2015macroscopic}, and
probabilistic methods~\cite{delarue2015global} (see also the reviews
\cite{sacerdote2013stochastic,bick2019understanding}).
Exact mean-field reductions, amenable to standard pattern-formation
analysis, have been derived in selected spatially-extended
networks~\cite{laing2015exact,esnaola2017synchrony,byrne2019next,schmidt2020bumps},
but generally the study of bumps in spiking models has been possible only with
numerical simulations~\cite{Laing:2001fc,compte2000synaptic}.

The present paper investigates localised patterns supported in discrete and
continuous networks of nonlocally coupled leaky integrate-and-fire neurons. In direct numerical
simulations, we use a well-known discrete model, proposed by Laing and
Chow~\cite{Laing:2001fc}, whose details will be
given later. For now it will suffice to consider a cursory formulation of
the model, simulated in \cref{fig:bumpSummary}(b). The network describes the idealised,
dimensionless voltage dynamics of $n$ all-to-all coupled neurons, evenly-spaced in a
cortex with ring geometry,
\begin{equation}\label{eq:toyNetwork}
  \dot v_i = -v_i + I_i(t) + \sum_{j=1}^n S_{ij}(v_j,\beta), 
  \qquad i = 1,\ldots,n.
\end{equation}
The dynamics of the $i$th neuron's membrane voltage is specified in terms of an Ohmic
leakage current $-v_i$, an external current $I_i(t)$, and voltage-dependent currents,
received from other neurons via synaptic connections; the latter currents, indicated
by $S_{ij}$, have a characteristic time scale $\beta$, and are caused by $v_j$
crossing a fixed threshold (when the $j$th neuron \textit{fires}).  After a firing event,
marked with a dot in \cref{fig:bumpSummary}(b) \color{black} and its
inset\color{black}, the neuron's voltage is instantly reset to a
default value, from which it can evolve again, following an ODE of
type~\cref{eq:toyNetwork}.
Discrete and continuous networks of this type are canonical
models of neural activity, widely adopted in the mathematical neuroscience
literature~\cite{%
vanVreeswijk:1994fb,
vanVreeswijk:1996jf,
ermentrout1998neural,
Ermentrout1998c,
Bressloff:2000uj,
Laing:2001fc,
Ermentrout:2002dl,
Osan:2002jq,
compte2003cellular,
Osan:2004ko,
Mirollo:2006ft,
Gerstner:2008be}. 
It is now established that such networks support bump attractors and localised
waves, but an explanation of the mathematical origins of the former is still lacking.

This paper presents a new approach to the problem, 
and uncovers a novel bifurcation structure for localised travelling waves of the
network, shedding light onto the nature of the bump attractor. Our findings
suggest an intriguing analogy between the bump attractor in the integrate-and-fire
network and the phenomenon of transition to turbulence in a pipe. The analogy between
the bifurcation scenarios of these two problems is notable, and we use it here to
summarise our results, highlighting similarities between the respective bifurcation
structures and dynamical regimes.

\subsection{Transition to turbulence in a pipe}
Stemming from the pioneering experiments of Reynolds~\cite{reynolds1883xxix}, a large
body of work in fluid dynamics has addressed how high-speed pipe flows transition from a
laminar state, whose analytical expression is known in closed form, to complex
spatio-temporal patterns, characteristic of the turbulent regime
(see~\cite{barkley2016theoretical} for a recent review).
In this context, the Navier-Stokes equations are studied as a deterministic dynamical
system, subject to changes in Reynolds number, the principal control parameter.
Experiments and computer simulations indicate that the laminar state is stable to
infinitesimal perturbations (linearly stable) up to
large values of the control parameter (up to at least Reynolds number $10^7$ in
numerical computations)~\cite{
salwen1980linear,
darbyshire1995transition,
meseguer2003linearized,
van2009flow,
manneville2015transition}.
However, when a disturbance is applied at sufficiently large Reynolds
numbers, a transition to turbulence is observed, depending sensitively on
the applied stimulus \cite{darbyshire1995transition,hof2003scaling}. Current opinions
view the transition as being determined by \emph{travelling wave solutions} to the
Navier-Stokes equations
\cite{schmiegel1997fractal,echhardt2002turbulence,faisst2003traveling,
wedin2004exact,pringle2009highly,gibson2009equilibrium}. 
These invariant states, whose spatial
profiles display
hallmarks of the turbulent transition, coexist with the laminar state at intermediate
Reynolds numbers, are linearly unstable, and provide an intricate blueprint for the
dynamics, in that orbits may visit transiently these repelling solutions in phase
space. Importantly, the waves lie on branches that are disconnected from the stable
laminar state, and emerge at saddle-node bifurcations
\cite{faisst2003traveling,wedin2004exact}: this turbulence mechanism is therefore
different from other paradigmatic routes to chaos, involving the destabilisation of
the laminar state, and the progressive appearance of more complicated structures via
a cascade of
instabilities~\cite{landau1944problem,hopf1948mathematical,ruelle1971nature}. 

\subsection{Summary of results} In a series of recent papers addressing turbulence
from a dynamical-system viewpoint, Barkley proposed an analogy between pipe
flows and excitable media, using the propagation of an electrical pulse along the
axon of a neuron as a metaphor for localised turbulence puffs
\cite{barkley2011simplifying,barkley2012pipe,barkley2015rise,barkley2016theoretical}.
The present paper offers a specular view, at a different scale: we are motivated by
studying a canonical, complex neurobiological network of coupled excitable neurons,
supporting localised spatio-temporal chaos, and we find a compelling similarity
between the bifurcation structure of waves in this system, and the one of waves in
the pipe turbulence.

With reference to \cref{fig:bumpSummary}, the principal control parameter of the
problem is $\beta$, the timescale of synaptic currents: a low $\beta$ gives small,
persisting currents, while $\beta \to \infty$ gives large instantaneous currents. A
homogeneous steady state 
exists and is linearly stable for all values of $\beta$ ($\Delta =0$
line in \cref{fig:bumpSummary}(b)), but transient localised stimuli trigger the bump
attractor~\cite{Laing:2001fc}. In the analogy, the homogeneous equilibrium plays
the role of a ``laminar state". We stress that the homogeneous steady state
is the only equilibrium of the model. Thus, the model can not support branches of
stationary bump solutions. Instead, we demonstrate that travelling waves are key to
understand the bump attractor.

We consider a spatially-continuous version of model~\cref{eq:toyNetwork}, which is known to support
waves advecting a low number of \emph{localised spikes}, or having a non-localised
profile~\cite{Ermentrout1998c,Bressloff:2000dq,Bressloff:2000uj,Ermentrout:2002dl,Osan:2002jq}.
The travelling waves of interest to us, however, have a localised profile, and advect a
\emph{large} number of spikes, such as the one presented in
\cref{fig:bumpSummary}(a). These structures are not accessible with the
current techniques, hence we develop here analytical and numerical tools to construct
them.
We define particular type of solutions, which retain a fixed number of spikes in
time; this class of solutions is sufficiently general to incorporate
travelling waves with an arbitrary, finite, number of spikes, and small perturbations to
them. We introduce the \emph{voltage mapping}, a new operator which formalises an
idea previously used in the literature for
spiking~\cite{Ermentrout1998c,Bressloff:2000dq,Bressloff:2000uj,Ermentrout:2002dl,Osan:2002jq,Avitabile2017}
and non-spiking networks
\cite{amari1977dynamics,ermentrout2010mathematical,bressloff2014waves,coombes2014neural}.
The voltage mapping is based on level sets describing firing events,
and it allows efficient travelling wave constructions and stability computations. 

Using the voltage mapping,
we construct numerically waves with more than $200$ concurrent spikes. These waves are
spatially localised, and
coexist with the trivial (laminar) state (see \cref{fig:bumpSummary}(a)); 
\textcolor{black}{most of the
waves we computed are unstable, and the stable ones have a small basin of attraction}. As in the
turbulence analogy, the waves contain features of the bump attractor: they pack a
seemingly arbitrary number of
spikes within the width of a bump attractor, and they advect them at an
arbitrarily slow speed, depending on $\beta$, and on the number of carried spikes. As
in the fluid-dynamical analogy, waves are disconnected from the laminar state.
Owing to the intrinsic non-smoothness of the network, the waves emerge primarily at
grazing points \textcolor{black}{(as opposed to the saddle-node bifurcations seen in
the fluid-dynamical analogy, and also observed here in certain parameter regimes).}
In addition, we present numerical evidence that the \textcolor{black}{transient
dynamics to} the bump attractor displays echoes of the unstable waves which, as in
the fluid-dynamics analogy, form building blocks for the localised structure. Also,
the characteristic wandering of the bump attractor, whose excursions become more
prominent as $\beta$ increases, is supported by this purely \emph{deterministic}
system, \color{black} akin to the pseudo-stochastic behaviour observed in balanced
neural networks~\cite{Vreeswijk.1998,Litwin-Kumar.2012,Rosenbaum.2014}.\color{black}

The paper is structured as follows: in \cref{sec:discreteModel} we introduce the
discrete model, characterise it as a non-smooth threshold network, and present
numerical simulations of bumps and waves; in \cref{sec:TWm} we introduce the
continuum model, the voltage mapping, and the construction of travelling waves; in
\cref{sec:TWstability} we discuss travelling wave stability, we present
numerical
results in \cref{sec:bif-structure-TW}, and we conclude in \cref{sec:conclusions}.

\section{Coherent structures in the discrete model}\label{sec:discreteModel}
We begin by introducing the discrete model by Laing and Chow~\cite{Laing:2001fc}. We
characterise it as a piecewise-linear dynamical system, and we show numerical
simulations of coherent structures. An important difference from the work by Laing
and Chow is that we consider a \emph{deterministic}
model, which we call the Discrete Integrate-and-Fire Model (DIFM). We remark that the
neurons considered here, taken in isolation, are in an \textit{excitable regime},
that is, they exhibit an all-or-none response, based on the input they receive. This
is considerably different from the so-called \textit{oscillatory regime}, in which
neurons, when decoupled from the network, display oscillations~\cite{vanVreeswijk:1994fb,Bressloff:2000uj,Mirollo:2006ft,Gerstner:2008be}.

\subsection{Description of the DIFM} 
\begin{figure}\label{fig:schematic}
  \centering
  \includegraphics[width=1.0\textwidth]{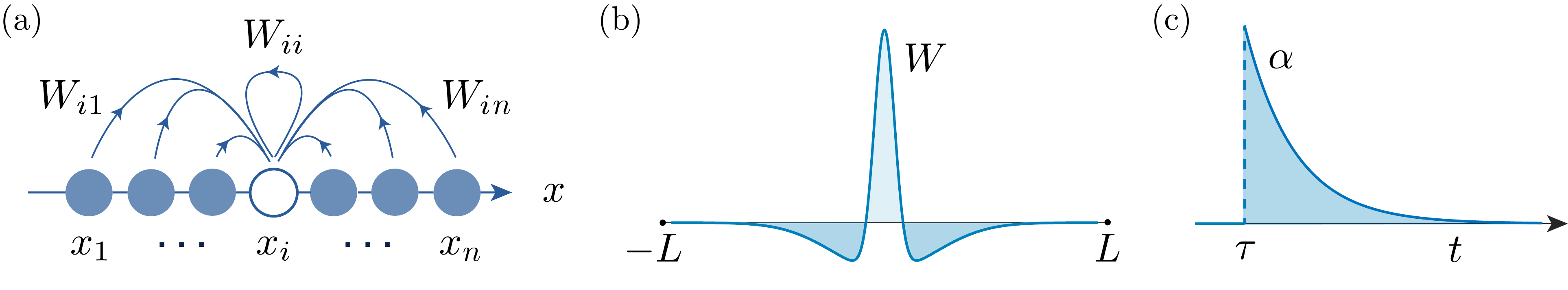}
  \color{black}
  \caption{(a): Schematic of the all-to-all coupled neurons with strengths $W_{ik} =
    w(|x_i-x_k|)$. In the model, we use a ring geometry, hence the left neighbour of
    $x_1$ is identified with $x_n$, and the right neighbour of $x_n$ with $x_1$. (b):
    The coupling (synaptic) function $w(x)$ is chosen to be $2L$-periodic and
    positive (excitatory) on short spatial scales, and negative
    (inhibitory) on long spatial scales. (c) Time-dependent neuronal (post-synaptic)
    currents are modelled via the function $\alpha$, which is null before a
  neuron fires $(t < \tau)$, and exponentially decaying thereafter (see
Equation~\cref{eq:alphaAndW}).}
\end{figure}
  \color{black}
The DIFM is a spatially-extended system of $n$ identical \emph{integrate-and-fire} neurons, posed on $\SSet = \RSet/2L\ZSet$,
that is, a ring of period $2L$. Neurons are indexed using the set $\NSet_n =
\{1,\ldots,n\}$ and occupy the discrete, evenly spaced nodes 
$x_i = -L + 2iL/n \in \SSet$, for $i \in \NSet_n$.
Neurons are coupled via their synaptic connections, which are modelled by a
continuous, bounded, even and exponentially decaying function $w \colon \SSet \to
\RSet$:
the strength of the connections from the $k$th to the $i$th neuron depends solely on
the distance $|x_i-x_k|$, measured around the ring, hence we write it as $W_{ik}=
w(x_i - x_k)$, for all $i,k \in \NSet_n$ (see \cref{fig:schematic}). 
\color{black}
We note that $w$ is $2L$-periodic by definition.
\color{black}

To the $i$th neuron is associated a real-valued time-dependent voltage function
$v_i(t)$, and the coherent structures of
interest are generated when voltages $\{v_i\}$ attain a threshold value (when neurons
\emph{fire}). The DIFM is formally written as follows:
\begin{align}
   \dot v_i(t) &= I_i(t)-v_i(t) +
   \frac{2L}{n}\sum_{k \in \NSet_n} \sum_{j \in \NSet} W_{ik} \alpha(t-\tau_{k}^{j})
   -\sum_{j \in \NSet} \delta (t-\tau^{j}_{i}),
    & i \in \NSet_n,
    \label{eq:vodes} \\
    v_i(0) &= v_{0i}, & i \in \NSet_n.
    \label{eq:inicond}
\end{align}

At time $\tau_i^j$, when the voltage $v_i$ reaches the value $1$ from below
for the $j$th time, a firing event occurs; a more precise definition of these
\emph{spiking times} will be given below. The formal evolution
equation~\cref{eq:vodes} expresses the modelling assumption that, when a neuron
fires, its voltage is instantaneously reset to $0$ (hence the Dirac delta), and a
so-called \emph{post-synaptic current} is received by all other neurons in the network,
with intensity proportional to the strength of the synaptic connections. The
time-evolution of this current is modelled via the \emph{post-synaptic function} $\alpha(t) = p(t) H(t)$, expressed as the product of a continuous potential
function $p$ and the Heaviside function $H$, hence the post-synaptic current is zero
before a spike.

In this paper, we present concrete calculations for
\begin{equation}\label{eq:alphaAndW}
  \alpha(t)=\beta \exp(-\beta t)H(t), \qquad
  w(x)= a_1 \exp(-b_1 |x|) -  a_2\exp(-b_2|x|),
\end{equation}
with $\beta, a_1, a_2, b_1, b_2 >0$, albeit the analytical and numerical framework
presented below is valid for more generic choices, subject to general assumptions
which will be made precise in \cref{subsec:TWCharacterisation}. The function $\alpha$ models exponentially-decaying currents with
rate $-\beta$ and initial value $\beta$, hence the limit $\beta \to \infty$ approximates
instantaneous currents. Currents with an exponential rise and decay are also
used in literature. The synaptic coupling function $w$ is chosen so that connections are
positive (\emph{excitatory}) on the lengthscale $1/b_1$, and negative 
(\emph{inhibitory}) on the lengthscale $1/b_2$ (see \cref{fig:schematic}). 

In addition to the post-synaptic current, neurons are subject to an external stimulus
$I_i(t)$. In certain time simulations, coherent structures will be elicited with the
application of a transient, heterogeneous stimulus of the form
\begin{equation}\label{eq:stimulus}
  I_i(t) = I + d_1 H(\tau_\textrm{ext}-t)/\cosh(d_2 x_i), \quad i \in \NSet_n.
\end{equation}
Our investigation, however, concerns asymptotic states of the autonomous homogeneous
case $I_i(t) \equiv I$, hence one should assume $d_1 =0$, unless stated otherwise.  A
description of model parameters and their nominal values can
be found in ~\cref{tab:parameters}.


\subsection{Event-driven DIFM}
Laing and Chow studied and simulated a stochastic version of the DIFM, using the Euler
method and a first-order interpolation scheme to obtain the firing
times~\cite{Laing:2001fc}. We use here a different approach: in preparation for
our analytical and numerical treatment of the problem, we write the formal model
\crefrange{eq:vodes}{eq:inicond} as a system of $2n$ piecewise-linear ODEs. 
To this end we introduce the synaptic input variables
\begin{equation} \label{eq:xsub} 
  s_{i}(t)= \frac{2L}{n} \sum_{k \in \NSet_n} \sum_{j \in \NSet} W_{ik} \alpha(t-\tau_{k}^{j}),
  \qquad i \in \NSet_n
\end{equation}
and combining~\cref{eq:alphaAndW} and~\cref{eq:vodes} we obtain formally
\[
\begin{aligned}
   \dot v_i(t) &= I_i(t)-v_i(t) + s_i(t)
   -\sum_{j \in \NSet} \delta (t-\tau^{j}_{i})
    \\
   \dot s_i(t) &= -\beta s_i(t) +
   \frac{2L\beta}{n}\sum_{k \in \NSet_n} \sum_{j \in \NSet} W_{ik} \delta(t-\tau_{k}^{j})
 \end{aligned} \qquad i \in \NSet_n.
\]
One way to define the associated non-smooth dynamical system is to express the model
as an impacting system, by partitioning the phase space $\RSet^{2n}$ via a switching
manifold, on which a reset map is prescribed (see \cite{diBernardo:2008cu} and
references therein for a discussion on non-smooth and impacting systems). Here, we
specify the dynamics so as to expose the firing times $\{\tau^j_k\}$, as opposed to
the switching manifold: this is natural in the mathematical neuroscience context, and
it prepares our analysis of the continuum model. Since $\{\tau^j_k\}$ are the times
at which
orbits in $\RSet^{2n}$
reach the switching manifold, a translation between the two formalisms is possible.

Following these considerations, we set $\tau^0_i = 0$ for all $i \in \NSet_n$,
introduce the notation $f(\blank^\pm) = \lim_{\mu \to 0^+} f(\blank\pm\mu)$, and
define firing times as follows\footnote{
  Note that $\{\tau_i^0\}_i$ are not firing times, but auxiliary symbols for
  the definition of firing times~\cref{eq:firingTimes}. Indeed, since the sums in
\cref{eq:vodes} run for $j \in \NSet$, the $\{\tau_i^0\}_i$ are immaterial for the
dynamics.}
\begin{equation}\label{eq:firingTimes}
 \tau_i^j = 
  \inf 
  \big\{
    t \in \RSet \colon t > \tau_i^{j-1}, \;
    v_i(t^-) = 1, \;
    \dot v_i(t^-) > 0
  \big\},
  \qquad
  i \in \NSet_n, \quad j \in \NSet.
\end{equation}
We arrange firing times in a monotonic increasing sequence
$\{\tau_{i_k}^{j_k}\}_{k=1}^q$ such that 
\begin{equation}\label{eq:timePartition}
(0,T] = 
\bigcup_{k \in \NSet_{q+1}} \big(\tau^{j_{k-1}}_{i_{k-1}},\tau^{j_k}_{i_k} \big],
\qquad
0 = \tau^{j_0}_{i_0}< \tau^{j_1}_{i_1} \leq \ldots 
\leq \tau_{i_{q}}^{j_{q}} < \tau_{i_{q+1}}^{j_{q+1}} = T,
\end{equation}
for some time horizon $T > 0$, and obtain the desired set of $2n$
piecewise-linear ODEs
\begin{equation}\label{eq:eventODE}
  \dot v_i  = I_i-v_i + s_i, \quad \dot s_i  = -\beta s_i  
  \qquad i \in \NSet_n, 
  \qquad t \in 
  \bigcup_{k \in \NSet_{q+1}} \big(\tau^{j_{k-1}}_{i_{k-1}},\tau^{j_k}_{i_k} \big], 
\end{equation}
with initial and reset conditions
\begin{align} 
  v_i(0) & = v_{0i}, 
     & s_i(0) &= s_{0i},  
     & i \in \NSet_n, &
     & & 
   \label{eq:eventICs}
     \\
  v_{i_k}(\tau^{j_k\,+}_{i_k}) &= 0, 
  & s_l(\tau^{j_k\,+}_{i_k})  & = s_l(\tau^{j_k \, -}_{i_k}) + \frac{2L\beta}{n}W_{li_k}, 
  &l \in \NSet_n, &
  & & k \in \NSet_q,
\label{eq:resets}
\end{align}
respectively. Henceforth, we refer to the non-smooth dynamical system
\crefrange{eq:firingTimes}{eq:resets} with connectivity function $w$ given
by~\cref{eq:alphaAndW} and stimulus~\cref{eq:stimulus} as the \emph{event-driven
DIFM} or simply \emph{DIFM}, that is, we view this model as a substitute for the formal
system~\crefrange{eq:vodes}{eq:inicond}.

Even though the firing-time notation may seem cumbersome at first, the evolution of
the DIFM is remarkably simple: \Cref{eq:eventODE} states that between two
consecutive firing times, neurons evolve independently, subject to a linear ODE; a solution in closed form can be
written in terms of exponential functions,
parametrised by the firing times. Constructing a solution amounts to determining
firing times (impacts with the switching manifold), as is customary in piecewise-linear
systems. This aspect will be a recurring theme in the sections analysing
travelling waves in the continuum model.
\begin{figure}
  \centering
  \includegraphics{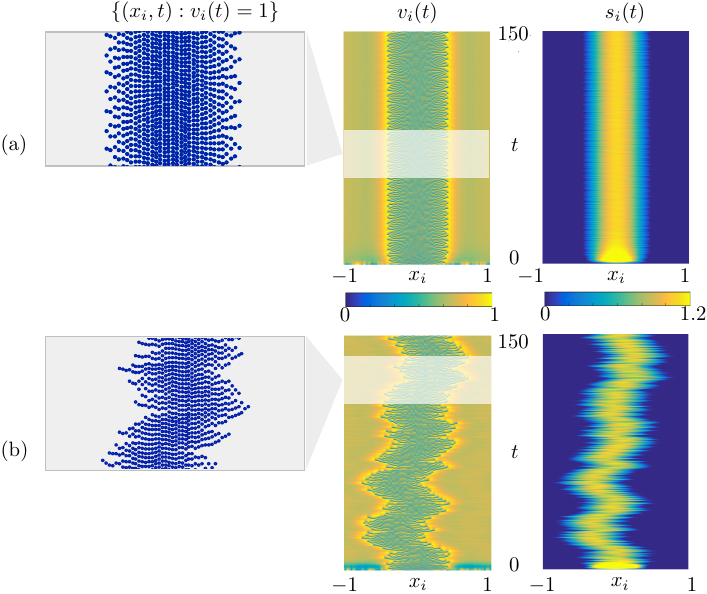}
  \caption{Bump attractors obtained via direct numerical simulation of the DIFM
    \crefrange{eq:firingTimes}{eq:resets} with external input~\cref{eq:stimulus} and
    connectivity function $w$ as in~\cref{eq:alphaAndW}.
    We visualise the network voltage (centre) and synaptic current (right) as
    functions of space and time and, in the inset (left), a raster plot of the
    firing events. Parameters as in~\cref{tab:parameters} with $n=80$, $d_1=2$ $d_2=10$. The
    network's synaptic time scale is $\beta = 1$ (a) and $\beta = 3.5$ (b),
    respectively. A localised coherent structure is visible in (a), which wanders
    when $\beta$ is increased. We remark that the system under consideration is
    deterministic.}
  \label{fig:exampleBumps}
\end{figure}

In simulations of the DIFM, we time step \Cref{eq:eventODE} rather than using its
analytic solution. We use an explicit adaptive 4-5th order
Runge-Kutta pair with continuous output, and detect events (compute firing times) by
root-finding \cite{Dormand1980,shampine1997matlab}. The simulation stops at each
firing event and is restarted after the reset conditions~\cref{eq:resets} are applied.
Simulating the event-driven DIFM instead of
\cref{eq:vodes} allows us to compute firing
times accurately, and to evolve the system without storing in memory or truncating
the synaptic input sums in~\cref{eq:vodes}. 

\subsection{Coherent structures in the DIFM}
\begin{figure}
  \centering
  \includegraphics{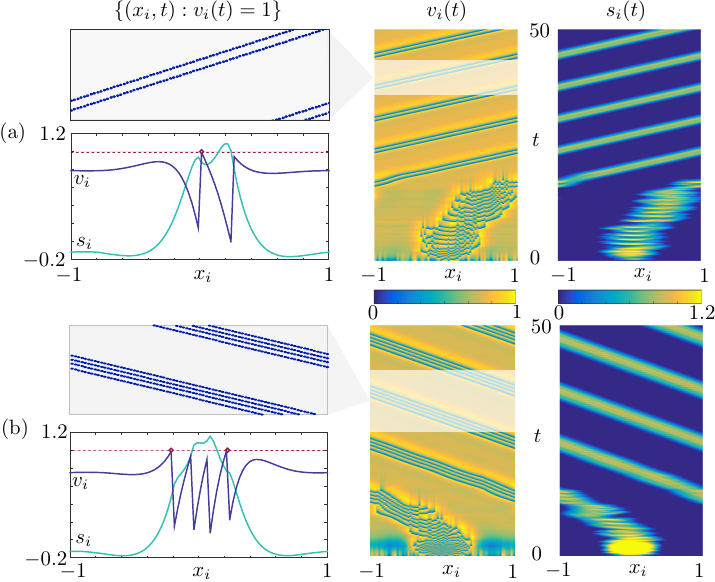}
  \caption{Stable coexistent waves obtained via direct numerical simulation of
    DIFM \crefrange{eq:firingTimes}{eq:resets} with external input~\cref{eq:stimulus}
    and connectivity function $w$ as in~\cref{eq:alphaAndW}. Parameters as
    in~\cref{tab:parameters} with $n=80$, $\beta = 4.5$ for both (a) and (b), but different
    initial stimuli: (a) $d_1=0.4$, $d_2=12$, (b) $d_1 = 2$, $d_2=10$.  Depending on
    the transient stimulus the model displays: (a) a wave
    propagating with positive speed, in which pairs of neurons fire asynchronously,
    but at short times from each other; (b) a similar structure
    involving a quartet of neurons. Coexisting structures with variable numbers
    of firing neurons have also been found (not shown). The spatial profiles
  indicate that neurons reach threshold (dashed red line) one at a time within a
pair (a) or two at a time within a quartet (b).}
  \label{fig:exampleWaves}
\end{figure}
The DIFM supports standing and travelling localised structures, as in the stochastic
setting~\cite{Laing:2001fc}. Bumps form robustly when we prescribe
homogeneous initial conditions\footnote{Typically we set $v_{0i} = u \in (0,1),
  s_{0i} = 0$, for $i \in \NSet_n$, but the coherent structures discussed in the
  paper can also be found with random, independent and identically distributed
initial voltages, for instance $v_{0i} \sim \mathcal{U}([0,1])$, where $\mathcal{U}$ is the uniform
distribution.}
with a short transient stimulus
(\Cref{eq:stimulus} with $\tau_{\textrm{ext}} = 2$). Since $I_i(t) \equiv I$ for
all $t>\tau_{\textrm{ext}}$, the structures observed over long-time intervals are
solutions to a homogeneous, non-autonomous problem.

As seen in \cref{fig:exampleBumps},
the bump wanders when $\beta$ is increased.
In passing, we note that this phenomenon is not due to stochastic effects, as studied
in other contexts
\cite{kilpatrick2013wandering,inglis2016general,Avitabile2017}, because the DIFM is
deterministic. For sufficiently
large $\beta$, the system exhibits stable travelling structures: in
\cref{fig:exampleWaves} we show two coexisting waves, found for $\beta = 4.5$ upon
varying slightly the width $d_1$ and intensity $d_2$ of the transient
stimulus. In each case we plot the voltage and synaptic profiles, and associated
raster plots. We notice different firing patterns in the waves, involving $2$ and $4$
firings, respectively: the wave with $2$ firings travels faster, and its voltage and
synaptic profiles are narrower. We found coexisting waves with a greater number of
firings and progressively lower speed, whose existence and bifurcation
structure will be at the core of the following sections.

\subsection{Remarks about coherent structures in the DIFM} The patterns presented so
far are found in the DIFM with a finite number of neurons $(n=80)$. At first sight,
the raster plots of the waves seem to indicate that neurons fire simultaneously in
pairs (\cref{fig:exampleWaves}(a)) or quartets (\cref{fig:exampleWaves}(b)) as the
structure travels across the network. A closer inspection of the instantaneous
profiles $v_i(t)$ reveal that this is not the case, as the threshold (red dashed
line) is attained by a single neuron in~\cref{fig:exampleWaves}(a), and by two
neurons in~\cref{fig:exampleWaves}(b): 
\color{black}
neurons in a raster pair fire alternately over a
short time interval, 
\color{black}
whereas a quartet displays a more complex firing pattern.

Hence, for finite $n$, the propagating structures displayed in~\cref{fig:exampleWaves} are
not strictly travelling waves, in the sense that the profile is not stationary in the
comoving frame; their dynamics is that of saltatory waves
\cite{Coombes:2003ec,Wolfrum:2015fu,Avitabile2017}. The saltatory nature of the
waves, however, is an effect of the network size: as we increase $n$, the amplitude
of temporal oscillations in the comoving frame scales as $O(n^{-1})$, and the
spatio-temporal profile converges to one of a travelling wave as $n \to \infty$. 

In addition, the structure in \cref{fig:exampleBumps}(a) is not a bump, in the sense
that it is not a spatially heterogeneous steady state of the DIFM, because the
pattern is sustained by firing events 
\color{black}
(and the presence of firing events means the voltage changes in time).
\color{black}
Indeed, the only equilibrium supported by the DIFM is the
homogeneous state $v_i(t) \equiv I$, $s_i(t)\equiv0$, $i \in \NSet_n$, which is
linearly stable for all values of $\beta$, as can be deduced by inspecting
system~\cref{eq:eventODE}.

\color{black}
By constructing travelling waves and investigating their stability
in a continuum version of the DIFM, we shall see that the structure
in \cref{fig:exampleBumps}(a) (and its wandering) can be interpreted as deterministic
chaotic behaviour.
\color{black}

\section{Travelling waves in the continuum model}\label{sec:TWm}
As stated in \cref{sec:discreteModel}, 
the profiles $\{v_i(t)\}_i$ and $\{s_i(t) \}_i$ in \cref{fig:exampleWaves} behave
like travelling wave solutions as $n \to \infty$. Motivated by this observation, we
study travelling waves in a continuum, translation-invariant version of the DIFM: we
set $d_1=0$ in the stimulus~\cref{eq:stimulus}, consider a continuum spatial domain,
and pose the model on $\RSet$ as opposed to $\SSet$, obtaining
\begin{equation} \label{eq:contMod}
  \begin{split} \partial_t v(x,t) = -
    v(x,t) + I 
        & + \sum_{j \in \NSet} \int_{-\infty}^\infty w(x-y) \alpha\big( t - \tau_j(y) \big) \, dy \\ 
	& - \sum_{j \in \NSet} \delta \big( t - \tau_j(x) \big), \qquad (x,t) \in \RSet \times
    \RSet.
  \end{split}
\end{equation}
The formal evolution equation presented above, which we henceforth call the
\emph{continuous integrate-and-fire model} (CIFM), has been proposed and studied by
several authors in the mathematical neuroscience literature
\cite{Ermentrout1998c,Golomb:1999cr,Bressloff:1999ik,Bressloff:2000dq,Osan:2002jq,Osan:2004ko}.  
In the CIFM, firing-time functions  $\tau_j(x)$ indicate that the neural patch at
position $x$ fires for the $j$th time, and replace the discrete model's firing
times $\tau_k^j$.\footnote{The index $j$ is used
as a superscript in the firing times, but for notational convenience we use it
as a subscript in the firing functions, so that $\tau_j(x_k) \approx \tau_k^j$.}
A graph of the firing functions replaces the raster plot in the discrete model, so
that a travelling wave in the CIFM corresponding to the $n \to \infty$ limit of the
structure in~\cref{fig:exampleWaves}(a), for instance, will involve $2$ linear firing
functions $\tau_1$, $\tau_2$, with $\tau_1(x)<\tau_2(x)$ for all $x \in \RSet$. 

The existence of travelling waves solutions in \cref{eq:contMod} with a single spike
has been studied
by Ermentrout \cite{Ermentrout1998c} who presented various scalings of the
wavespeed as a function of control parameters. A general formalism for the
construction and linear stability analysis of \emph{wavetrains} (spatially-periodic
travelling solutions) was introduced and analysed by Bressloff
\cite{Bressloff:2000dq}, who derived results in terms of Fourier series expansions.
The construction of travelling waves with multiple spikes was later studied by
O\c{s}an and coworkers~\cite{Osan:2004ko}, albeit stability for these states was not
presented and computations were limited to a few spikes, for purely excitatory
connectivity kernels. The common thread in the past literature on this topic is the idea
that travelling wave
construction and stability analysis rely entirely on knowledge of the firing function
$\tau_j$ (as in the DIFM, with firing times). A similar
approach has been used effectively in Wilson-Cowan-Amari neural field equations,
where it is often called \emph{interfacial dynamics} (see~\cite{amari1977dynamics} for the first study
of this type, \cite{Coombes:2014aa} for a recent review, and
\cite{coombes2011pulsating,folias2004breathing}, amongst others, for examples of
spatio-temporal patterns analysis).

Here we present a new treatment of travelling wave solutions that draws from this
idea; we introduce an operator, that we call the \emph{voltage mapping}, with the
following aims: (i)
Expressing a mapping between firing functions and
solution profiles, with the view of replacing the formal evolution equation
\cref{eq:contMod} for travelling waves with $m$ spikes (where $m$ is arbitrary).
(ii) Finding conditions for the linear stability of these waves. (iii) Using
root-finding algorithms to compute travelling waves and study their linear stability.
We will relate
to existing literature in our discussion. 

\color{black}
\subsection{Notation} Before analysing solutions to the CIFM, we discuss the notation
used in this section. We use
$| \blank |_{\infty}$ to denote the $\infty$-norm on $\CSet^m$.
We denote by $C(X,Y)$ the set of continuous
functions from $X$ to $Y$, and
use $C(X)$ when $Y = \RSet$. We denote by $B(X)$ ($BC(X)$) the set of
real-valued bounded (real-valued bounded, continuous) functions defined on $X$.
Further, for a positive number $\eta$, we shall use the
following exponentially weighted Banach spaces:
\[
  \begin{aligned}
    L^1_\eta(\RSet) & = 
  \Big\{ 
    u \colon \RSet \to \RSet \colon
    \Vert u \Vert_{L^1_\eta} = \int_\RSet e^{\eta x} |u(x)| \, dx < \infty
  \Big\}, \\
  C_\eta(\RSet,\CSet^m) & = 
  \Big\{ 
    u \in C(\RSet,\CSet^m) \colon 
    \Vert u \Vert_{C_{m,\eta}} = \sup_{x \in \RSet} e^{-\eta |x|} \, |u(x)|_{\infty}
    < \infty
  \Big\}.
\end{aligned}
\]
\color{black}

\subsection{Characterisation of solutions to the CIFM via the voltage mapping}
\label{subsec:TWCharacterisation}
We begin by discussing in what sense a voltage function $v$ satisfies the CIFM formal
evolution equation~\cref{eq:contMod}. While we eschew the definition of the CIFM as a
dynamical system on a Banach space (a characterisation that is currently unavailable
in the literature), we note that progress can be made for voltage profiles with a
constant and finite number of spikes for $t \in  \RSet$. This class of
solutions is sufficiently large to treat travelling waves, and small perturbations to
them. 

We make a few assumptions on the network coupling, and
we restrict the type of firing functions and solutions of interest, as follows:
\begin{hypothesis}[Coupling functions] \label{hyp:synapticFunctions}
  The connectivity kernel $w$ is an even function  in $C(\RSet) \cap L^1_\eta(\RSet)$,
  for some $\eta >0$. The post-synaptic function $\alpha \colon \RSet \to
  \RSet_{\geq 0}$ can be written as $\alpha(t) = p(t)H(t)$, where $H$ is the
  Heaviside function, and $p \colon \RSet_{\geq 0} \to \RSet$ is a bounded and
  everywhere differentiable Lipschitz function, hence $p,p' \in B(\RSet)$.
\end{hypothesis}

\begin{definition}[$m$-spike CIFM solution]\label{def:vM}
  Let $m \in \NSet$ and $I \in \RSet$. A function $v_m \colon
  \RSet^2 \to \RSet$ is an $m$-spike CIFM solution if there exists $\tau
  =(\tau_1,\ldots,\tau_m) \in C(\RSet,\RSet^m)$ such that
  $\tau_1 < \ldots < \tau_m$ on $\RSet$ and
  \begin{align}
  & \begin{aligned}
    v_m(x,t) = I & + \sum_{j\in \NSet_m} \int_{-\infty}^t \int_{-\infty}^\infty\!\!\! \exp(z-t) w(x-y)
	       \alpha(z-\tau_j(y)) \, dy \, dz \\
	       & - \sum_{j\in \NSet_m} \exp(\tau_j(x)-t) H(t-\tau_j(x)),
	       \qquad (x,t) \in \RSet^2 
  \end{aligned} \label{eq:vProfile}\\
  & v_m(x,t)=1, \qquad  (x,t)  \in \FSet_\tau,
  \label{eq:vCrossings} \\
  & v_m(x,t) < 1, \qquad (x,t) \in \RSet^2 \setminus \FSet_\tau,
  \label{eq:vBounded}
  \end{align}
  where 
  \[
  \FSet_\tau = \bigcup_{j \in \NSet_m} \{ (x,t) \in \RSet^2 \colon t = \tau_j(x)\}.
  \]
  We call $\tau$ and $\FSet_\tau$ the firing functions and the firing set of $v_m$,
  respectively.
\end{definition}
The definition above specifies how we interpret solutions to~\cref{eq:contMod}, and
is composed of three ingredients: (i) \Cref{eq:vProfile}, which derives from
integrating~\cref{eq:contMod} on $(-\infty,t)$, and expresses a mapping between the set
of $m$ firing functions $\tau$ and the voltage profile; (ii) System
\cref{eq:vCrossings}, which couples the firing functions by imposing the threshold
crossings; (iii) A further condition on $v_m$, ensuring that the solution has exactly
$m$ spikes, attained at the firing set; this is necessary because, as we shall see
below, it is possible to find a set of $m$ functions $\tau$ satisfying
\Crefrange{eq:vProfile}{eq:vCrossings}, but exhibiting a number of threshold
crossings greater than $m$.

We now aim to characterise $m$-spike CIFM solutions by means of a \textit{voltage
mapping}, which can be conveniently linearised around a firing set, and
is a key tool to construct waves and analyse their stability. Inspecting
\cref{eq:vProfile} we note that the voltage profile features two contributions, one
from the (synaptic) coupling functions $w$ and $\alpha$, and one from reset
conditions. This observation leads to the following definitions:
\color{black}
\begin{definition}[Synaptic, Reset, and Voltage mappings]\label{def:SR}
  Let $u: \RSet \to \RSet$. We define the synaptic
  operator, $S$, and the reset operator, $R$, by
  \begin{align} 
    (S u)(x,t)  & = \int_{-\infty}^t \int_{-\infty}^\infty \exp(z-t) w(x-y)
    \alpha(z-u(y)) \, dy \, dz, & (x,t) \in \RSet^2,  \label{eq:S} \\
    (R u)(x,t) & = -\exp(u(x)-t) H(t-u(x)),  & (x,t) \in \RSet^2. \label{eq:R}
  \end{align}
  Further, let $m \in \NSet$, $I \in \RSet$ and $\tau \in C(\RSet,\RSet^m)$. The
  $m$-spike voltage mapping, $V_m$, is the operator defined as
  \begin{equation}\label{eq:voltageMapping}
    V_m\tau = I + \sum_{j \in \NSet_m} ( S \tau_j + R \tau_j).
  \end{equation}

\end{definition}
These operators map univariate functions, such as a firing function, to bivariate
functions, such as the spatio-temporal voltage profile. Under
\cref{hyp:synapticFunctions} it holds $S \colon C(\RSet) \to BC(\RSet^2)$, $R \colon
C(\RSet) \to B(\RSet^2)$, hence $V_m \colon C(\RSet) \to BC(\RSet^2)$ (see \ref{supp:prop:SRMappings}).
\color{black}

By construction, the voltage operator characterises $m$-spike CIFM solutions, as the
following proposition shows.
\begin{proposition}\label{prop:voltageMapping}
  Let $m \in \NSet$, $I \in \RSet$. An $m$-spike CIFM solution exists if,
  and only if, there exists $\tau \in C(\RSet,\RSet^m)$ such that
  \begin{align}
    &V_m \tau = 1,  && \text{in $\FSet_\tau$}, 
      \label{eq:voltageMappingThresholds}\\
      &V_m \tau < 1,&& \text{in $\RSet^2 \setminus \FSet_\tau$}
  \end{align}
\end{proposition}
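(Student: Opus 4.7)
The statement is essentially a direct translation of \cref{def:vM} into the language of the voltage mapping $V_m$, so my proof would proceed by unpacking definitions and verifying that the three conditions in \cref{def:vM} correspond one-to-one with the two conditions on $V_m\tau$ (plus a continuity/ordering bookkeeping). The core observation is that, for any $\tau \in C(\RSet,\RSet^m)$, evaluating $V_m\tau$ at a point $(x,t) \in \RSet^2$ reproduces the right-hand side of \cref{eq:vProfile} termwise: the inner double integrals are exactly $(S\tau_j)(x,t)$ (cf.\ \cref{eq:S}), while the reset contributions $-\exp(\tau_j(x)-t)H(t-\tau_j(x))$ are exactly $(R\tau_j)(x,t)$ (cf.\ \cref{eq:R}), and their sum together with $I$ is $V_m\tau$ by \cref{def:voltageMapping}. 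Thus \cref{eq:vProfile} is equivalent to the identity $v_m = V_m \tau$ on $\RSet^2$.

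For the forward implication, I would take an $m$-spike CIFM solution $v_m$ with associated firing functions $\tau = (\tau_1,\ldots,\tau_m) \in C(\RSet,\RSet^m)$, substitute $v_m = V_m\tau$ (from the observation above) into \cref{eq:vCrossings,eq:vBounded}, and read off $V_m\tau = 1$ on $\FSet_\tau$ and $V_m\tau < 1$ on $\RSet^2\setminus \FSet_\tau$. For the reverse implication, suppose $\tau \in C(\RSet,\RSet^m)$ satisfies both conditions on $V_m\tau$. Define $v_m := V_m\tau$ on $\RSet^2$. By construction $v_m$ satisfies \cref{eq:vProfile}, and the two hypotheses on $V_m\tau$ are precisely \cref{eq:vCrossings,eq:vBounded}, so $v_m$ qualifies as an $m$-spike CIFM solution with firing functions $\tau$, provided the ordering clause in \cref{def:vM} is in force.

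The only point that requires a brief comment is the ordering $\tau_1<\ldots<\tau_m$ that appears in \cref{def:vM} but is not written explicitly in the statement of the proposition. I would either (i) note that this ordering is assumed throughout, since the firing functions are indexed by the order in which spikes occur at each $x$, so by relabelling components of $\tau$ one may assume $\tau_1<\ldots<\tau_m$ without loss of generality, or (ii) incorporate it silently into the equivalence, observing that any continuous $\tau$ satisfying $V_m\tau=1$ on $\FSet_\tau$ and $V_m\tau<1$ elsewhere automatically has pointwise distinct components (coincidences $\tau_i(x)=\tau_j(x)$ would either reduce the number of effective spikes or contradict the strict inequality off $\FSet_\tau$), so after reordering one recovers the strict monotonicity required by \cref{def:vM}.

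The essentially trivial regularity issue — that $V_m\tau$ is well-defined for $\tau \in C(\RSet,\RSet^m)$, with the reset term giving boundedness and the synaptic term giving boundedness and continuity — is already taken care of by \cref{prop:SRMappings} under \cref{hyp:synapticFunctions}, so no new estimates are needed here. I anticipate no serious obstacle: the main ``work'' is simply recognising that \cref{eq:vProfile} and $v_m = V_m\tau$ are the same identity, so the proof will be short and bookkeeping-only.
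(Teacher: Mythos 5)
Your proof is correct and follows essentially the same route as the paper, whose entire argument is to set $v_m = V_m\tau$ and unpack \cref{def:voltageMapping} so that \cref{eq:vProfile} becomes the identity $v_m = V_m\tau$ and the remaining two conditions of \cref{def:vM} become the two conditions on $V_m\tau$. Your additional remark about the ordering clause $\tau_1<\cdots<\tau_m$ is a sensible piece of bookkeeping that the paper leaves implicit, but it does not change the substance of the argument.
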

\begin{proof}
  The statement follows by setting $v_m(x,t) = (V_m \tau)(x,t)$ and applying the
  definition of the voltage mapping, \Cref{eq:voltageMapping}.
\end{proof}

%

\cref{prop:voltageMapping} implies that the voltage of an
$m$-spike solution can be computed for any $(x,t) \in \RSet^2$ once the
firing functions $\tau$ are known. The spatio-temporal profile of an $m$-spike
solution is determined entirely by its firing functions. This aspect, which
underlies the formal evolution equation \cref{eq:contMod} and the literature which
analyses it, is a key part of what follows and, as we shall see below, it also
suggests a natural way to compute travelling waves, and determine their linear
stability.
A first step in this direction is the definition of travelling waves via the
voltage mapping.

\subsection{Travelling waves with m-spikes (\tw{m})} Following \cref{prop:voltageMapping},
we can capture travelling waves with $m$ spikes (\tw{m}) using the voltage mapping,
and a set of parallel firing functions. Henceforth, we will assume without
loss of generality that the propagating speed of the wave is positive: for any wave
with $c>0$, there exists a wave with speed $-c$, and the wave profiles related by the
transformation $x \to -x$.



\begin{definition}[\tw{m}] Let $m \in \NSet$, $c>0$, and let $T \in \RSet^m$ with
  $T_1 < \cdots <T_m$. A travelling wave with
  $m$ spikes (\tw{m}), speed $c$, and coarse variables $(c,T)$
  is an $m$-spike CIFM solution with firing functions $\{ \tau_j(x) = x/c + T_j \}_{j \in
  \NSet_m}$. 
\end{definition}

To each travelling wave solution is associated a travelling wave profile which is
advected with propagation speed $c$. From \cref{prop:voltageMapping} we expect this
profile to be determined entirely by the firing functions, as confirmed in the
following result.

\begin{proposition}[\tw{m} profile]\label{prop:nu}
  A \tw{m} with speed $c$ satisfies $(V_m\tau)(x,t)= \nu_m(ct-x;c,T)$,
  and its $(c,T)$-dependent travelling wave profile $\nu_m$ is given by
  \begin{equation} \label{eq:nuXi}
  \begin{aligned}
    \nu_m(\xi;c,T) 
    = I  & - \sum_{j \in \NSet_m}
    \exp\bigg( -\frac{\xi -cT_j}{c} \bigg) H\bigg(\frac{\xi- c T_j}{c}\bigg) \\ 
    & + \frac{1}{c} \sum_{j \in \NSet_m}
    \int_{-\infty}^\xi \exp\bigg( \frac{z-\xi}{c} \bigg) \int_0^\infty
    w(y-z+cT_j) p(y/c) \, dy \,dz.
   \end{aligned}
   \end{equation}
\end{proposition}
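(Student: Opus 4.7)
The plan is to verify the formula by direct substitution of $\tau_j(x) = x/c + T_j$ into the voltage mapping $V_m\tau = I + \sum_{j \in \NSet_m}(S\tau_j + R\tau_j)$ (\cref{def:voltageMapping}), and to show that each resulting expression depends on $(x,t)$ only through the comoving coordinate $\xi = ct - x$.

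First I would handle the reset contributions, which are immediate: for each $j$,
\[
(R\tau_j)(x,t) = -\exp\!\big(x/c + T_j - t\big)\,H\!\big(t - x/c - T_j\big)
= -\exp\!\Big(-\frac{\xi - cT_j}{c}\Big)\,H\!\Big(\frac{\xi - cT_j}{c}\Big),
\]
using $t - x/c - T_j = (\xi - cT_j)/c$. Summing over $j$ produces the second line of \cref{eq:nuXi} exactly.

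Next I would attack $S\tau_j$, which is the main computational step. Writing $\alpha(z - \tau_j(y)) = p(z - y/c - T_j)\,H(z - y/c - T_j)$, the Heaviside restricts the inner integral to $y \le c(z - T_j)$, so
\[
(S\tau_j)(x,t) = \int_{-\infty}^{t}\!\! \exp(z-t) \!\int_{-\infty}^{c(z-T_j)}\!\! w(x-y)\, p\!\big(z - y/c - T_j\big)\,dy\,dz.
\]
A change of variable $u = c(z - T_j) - y$ in the inner integral simplifies $p$'s argument to $u/c$ and converts the domain to $(0,\infty)$, yielding $\int_0^\infty w(x - cz + cT_j + u)\, p(u/c)\, du$. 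I would then perform the substitution $\tilde z = cz - x$ in the outer integral, so that $z = t$ corresponds to $\tilde z = \xi$, $dz = d\tilde z / c$, $\exp(z - t) = \exp\!\big((\tilde z - \xi)/c\big)$, and the inner kernel becomes $w(u - \tilde z + cT_j)$. Renaming $\tilde z \to z$ and $u \to y$ gives precisely
\[
(S\tau_j)(x,t) = \frac{1}{c}\int_{-\infty}^{\xi} \exp\!\Big(\frac{z-\xi}{c}\Big) \!\int_0^\infty\!\! w(y - z + cT_j)\, p(y/c)\,dy\,dz,
\]
which is the $j$th summand in the third line of \cref{eq:nuXi}.

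The main technical point to check is that the double integral is well-defined and that Fubini/changes of variable are legitimate; this follows from \cref{hyp:synapticFunctions} ($p \in B(\RSet_{\geq 0})$, $w \in L^1_\eta(\RSet)$) combined with the outer $\exp((z-\xi)/c)$ factor providing absolute integrability for $c>0$, much as in the estimate underlying \cref{prop:SRMappings}. Combining the reset and synaptic computations, summing over $j \in \NSet_m$ and adding $I$ shows $(V_m\tau)(x,t)$ depends on $(x,t)$ only through $\xi = ct - x$ and equals $\nu_m(\xi;c,T)$ as in \cref{eq:nuXi}. The expected obstacle is bookkeeping in the two changes of variable (signs, limits of integration, and keeping the $T_j$-dependence consistent), rather than any analytical subtlety.
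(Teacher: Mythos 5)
Your proposal is correct and follows essentially the same route as the paper's proof in \cref{supp:proof:prop:nu}: direct substitution of $\tau_j(x)=x/c+T_j$ into $S$ and $R$, with the same two changes of variable (first in the inner spatial integral to normalise the argument of $p$, then in the outer temporal integral to expose $\xi=ct-x$). Your additional remark on absolute integrability justifying the manipulations is a minor, harmless addition not spelled out in the paper.
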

\begin{proof} 
  See \cref{supp:proof:prop:nu}.
\end{proof}

\cref{prop:nu} shows that the travelling wave profile is completely determined by
the vector $(c,T) \in \RSet_{> 0} \times \RSet^m$, that is, $(c,T)$ is a vector of
coarse variables for the travelling wave. 
In the discrete model we introduced an auxiliary spatially-extended variable for the
model, the synaptic input $\{s_i(t)\}_i$ defined in \cref{eq:xsub}. In the
continuum model, the corresponding variable is the function $s_m(x,t) = \sum_{j \in
\NSet_m}(S\tau_j)(x,t)$, which in a \tw{m} satisfies $s_m(x,t) = \sigma_m(ct-x;c,T)$, with
\begin{equation}\label{eq:sigXi}
 \sigma_m(\xi; c,T) = \frac{1}{c} \sum_{j=1}^m \int_0^\infty w(y-\xi+cT_j)
  p(y/c) \, dy.
\end{equation}

\subsection{Travelling wave construction}
\begin{figure} \label{fig:TW5-TW20}
  \centering
  \includegraphics{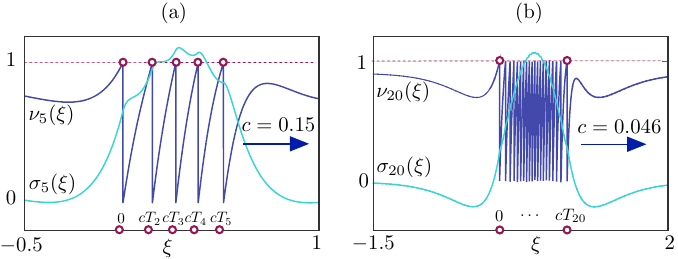}
  \caption{Wave profiles for (a) \tw{5} and (b) \tw{20} obtained by
  solving \cref{prob:TWm} for $m=5$ and $m=20$, respectively, and then subsitituting 
  $(c,T_{1},\dots,T_m)$ into the expression for voltage
  profile \cref{eq:nuXi} and synaptic profile \cref{eq:sigXi}. The profile $\nu$ is
  computable at any $\xi \in \RSet$, here we plot it using an arbitrary grid in the
  intervals (a) $[-0.5,1]$ and (b) $[-1.5,2]$. Parameters as in~\cref{tab:parameters}
  with (a) $\beta = 4.5$ and (b) $\beta = 7.7 $.} 
\end{figure}

\Cref{prop:nu} suggests a simple way to compute a \tw{m}, by
determining its $m+1$ \emph{coarse variables} $(c,T)$, as a solution to the following
\emph{coarse problem}:

\begin{problem}[Computation of \tw{m}]\label{prob:TWm}
  Find $(c,T) \in \RSet_{> 0} \times \RSet^{m}$ such that  $T_1< \cdots < T_m$ and
  \begin{align}
      T_1 & = 0,  \label{eq:phaseCond} \\
      \nu_m(cT_i^-; c, T)  & = 1, \qquad \text{for $i \in \NSet_m$}, 
	  \label{eq:nuThreshCross} \\
      \nu_m(\xi; c,T) & < 1, \qquad 
                        \text{on $\RSet \setminus \cup_{j \in \NSet_m}\{cT_j^-\}$}.
	\label{eq:lessThanOne}
   \end{align}
\end{problem}
\Cref{eq:nuThreshCross} of the coarse problem imposes that the travelling wave
profile crosses the threshold $1$ when $\xi \to cT^-_j$, which is a necessary and
sufficient condition to ensure $v_m = 1$ in $\mathbb{F}_\tau$ (see \cref{cor:limits}).  
As expected, if $\nu_m$ is a travelling wave profile, then so is
$\nu_m(\xi+\xi_0)$ for any $\xi_0 \in \RSet$; \Cref{eq:phaseCond} fixes the phase of
the travelling wave, by imposing that the profile crosses threshold as $\xi \to 0^-$.

If $m=1$, \Crefrange{eq:phaseCond}{eq:nuThreshCross} of the coarse problem
reduce to a compatibility condition for the speed $c$,
\[
c \int_{-\infty}^0 \int_0^\infty \exp(s) w\big(c(y-s)\big) p(y) \, dy \,ds =
I-1,
\]
which implicitly defines an existence curve for \tw{1} in the ($c$,$I$)-plane. This
result is in agreement with what was found in
\cite{Osan:2004ko,Ermentrout1998c}. Existence curves in other
parameters are also possible, and are at the core of the numerical bifurcation
analysis presented in detail in the sections below.

For $m>1$, the coarse problem must be solved numerically. A simple solution strategy
is to find a candidate solution using Newton's method for the system of $m+1$
transcendental equations \crefrange{eq:phaseCond}{eq:nuThreshCross}, with $\nu_m$
given by \cref{prop:nu}, and with initial guesses estimated from direct simulation of
the discrete model with large $n$, or from a previously computed coarse vector.
The candidate solution can then be evaluated at arbitrary $\xi \in
\RSet$, hence it is accepted if \cref{eq:nuThreshCross} holds on a spatial grid covering
$[-L,L] \subset \RSet$, with $L \gg 1$. In passing, we note that this procedure is
considerably cheaper than a
standard travelling wave computation for PDEs, which requires the solution of a
boundary value problem, and hence a discretisation of differential operators on
$\RSet$. Depending on the particular choice of $\alpha$ and $w$, the profile
$\nu_m$ is either written in closed form, as is the case for the
choices~\cref{eq:alphaAndW}, or approximated using standard quadrature rules. 

A concrete calculation is presented in \cref{fig:TW5-TW20}, where we show travelling
wave profiles and speeds of a \tw{5} and a \tw{20}. In passing, we note that the
synaptic profile of a \tw{m} at a given time is similar to a bump, but displays
modulations at the core (visible in \cref{fig:TW5-TW20}), as predicted by the
Heaviside switches in~\cref{eq:sigXi}. Travelling waves with a large number of
spikes, such as these ones, have not been accessible to date.

\begin{remark}
  \cref{fig:TW5-TW20} shows that profiles with $\nu_m(cT_j^-)=1$ propagate with
  \textit{positive} speed, and this does not contradict the numerical simulations in
  \cref{fig:exampleWaves}, where solutions profiles with $v_m(x,\tau_j(x)^-) = 1$
  propagate with \textit{negative} speed. This is a consequence of choosing $\xi = ct
  -x$ (as in~\cite{Osan:2004ko}), hence initial conditions for the time simulations
  are obtained by reflecting $\nu_m$ about the $y$ axis, since $v_m(x,0) = \nu_m(-x)$. 
\end{remark}


%
%

\section{Wave Stability}\label{sec:TWstability}

\begin{figure}
  \centering
  \includegraphics{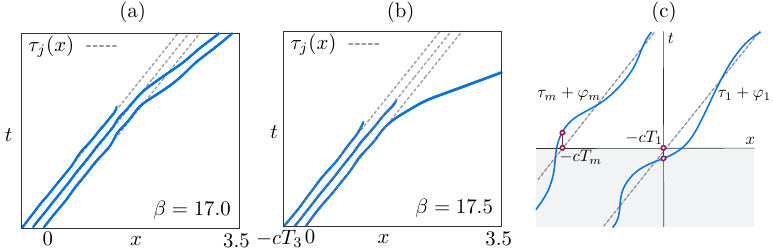}\label{fig:TildeTau}
  \caption{(a)-(b): Examples illustrating the destabilisation of a \tw{3} solution. A
    time simulation of the DIFM is initialised using wave profiles obtained solving
    \cref{prob:TWm} for $m = 3$ at (a) $\beta = 17$ and (b) $\beta = 17.5$.
  Parameters as in~\cref{tab:parameters}, domain half-width $L =4$ and network
  size $n =1000$. The firing functions $\{\tau_j\}$ are plotted for reference.
  Oscillatory perturbations to the firing functions do not decrease with time, hence
  the wave is unstable. The dynamics leads to stable (a) \tw{2} and (b)
  \tw{1} solutions. (c): Perturbations $\tau + \phi$ to the firing functions $\tau$ of a
\tw{m}. At $t=0$ each firing function $\tau_i$ is perturbed by an
amount $\phi_i(-cT_i)$. A \tw{m} is linearly stable if $\phi_i(-cT_i)$ being small
implies that $\phi_i(x)$ stays small for all $x \in (-cT_i,\infty)$ and $i \in
\NSet_m$ (see \cref{def:linearStability}).}
\end{figure}

The time simulations in \cref{sec:discreteModel} demonstrate that, for sufficiently
large values of $\beta$, travelling waves with a variable number of spikes coexist and
are stable. It is natural to ask whether these waves destabilise as $\beta$, or any
other control parameter of the model, is varied.
An example of a prototypical wave instability is presented in \cref{fig:TildeTau} for
\tw{3}: a travelling wave is computed solving \cref{prob:TWm}, and this solution is
used as initial condition for a DIFM simulation with $n=1000$ neurons. For
sufficiently large $\beta$, the wave is unstable,
as exemplified by the raster plots in \cref{fig:TildeTau}(a)--(b),
in that the firing functions never return to the ones of a \tw{3}. 
%
 
\cref{fig:TildeTau} shows that the firing set of the solution is composed of 3
disjoint curves, initially close to the ones of a \tw{3}, from which they depart
progressively. Ultimately, some firing functions terminate, and the dynamics displays
an attracting \tw{2} or \tw{1}. Capturing the transitions from a \tw{m} to
a travelling wave with fewer spikes is a nontrivial task. Studying the
\textit{nonlinear stability} is not possible with the current definition of CIFM
solutions, which require a constant number of spikes.
%
The voltage mapping, however, opens up the possibility of studying the \emph{linear
stability} of \tw{m}: the spatio-temporal voltage profile of an $m$-spike
solution is determined by its firing functions, $\tau$, via \cref{eq:voltageMapping};
small perturbations $\tau + \phi$ to $\tau$, induce small perturbations to
the spatio-temporal profile, and we expect that a suitable linearisation of the
voltage mapping carries information concerning the asymptotic behaviour of these
perturbations.
 \begin{figure} \label{fig:bif-diag-TW3}
  \centering
  \includegraphics{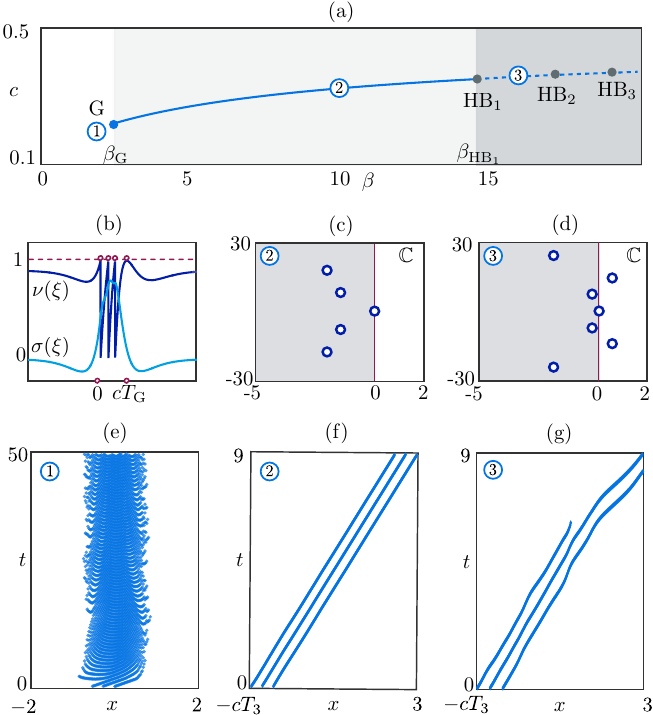}
  \caption{(a) Branch of $\tw{3}$ solutions in the parameter $\beta$, using $c$ as
    solution measure. The branch originates
    at a grazing point $G$, illustrated by the profile in (b). As
    $\beta$ increases, three pairs of complex conjugate roots of $E$ (see
    \Cref{eq:EDefinition}) cross the imaginary axis at the oscillatory (Hopf)
    bifurcation points $\hb1$, $\hb2$, $\hb3$. Panels (c)
    and (d) show selected roots of $E$, before and after $\hb1$, at $\beta = 10$ and
    $16$, respectively. (e)--(g) Raster plots for time simulations of the
    DIFM with $n=500$ and domain half-width $L=3$, initialised from solutions to
    \cref{prob:TWm} at $\beta = 2.17$, $10$, and $16$, respectively. The simulations
    show the dynamics of the model for $\beta < \beta_\textrm{G}$ (where a \tw{3}
    does not exist in the continuum limit), for $\beta \in
    (\beta_\textrm{G},\beta_{\textrm{HB}_1})$ (where \tw{3} is stable according to
    the analysis in (c)), and for $\beta > \beta_{\textrm{HB}_1}$ (where \tw{3} is unstable to
    oscillatory perturbations, as predicted in (d)). Parameters as in
    \cref{tab:parameters}, with $d_1=0$.}
  \end{figure}
 
\color{black}
Building on the definitions and results in \cref{sec:TWm}, we have formalised the concept of linear stability, and developed an
algorithm for \tw{m} linear stability computations. We give here a nontechnical
summary of the main results, and we refer to the Supplementary Material for a longer
discussion including definitions, theorem statements, and proofs.

\textit{Result 1 (\cref{lemma:linearVM}).} If two distinct $m$-spike solutions have firing functions $\tau$
and $\tau+\phi$ then, to leading order, $\phi$ is in the kernel
of a bounded linear operator, $L \colon C_\eta(\RSet,\RSet^m) \to
C_\eta(\RSet,\RSet^m)$,
obtained by linearising the voltage mapping $V_m$ around $\tau$. We recall that
$\eta$ bounds the decay rate of the connectivity function, $w \in L^1_\eta(\RSet)$
(see \cref{hyp:synapticFunctions}). This implies that admissible perturbations $\phi$
are allowed to grow exponentially
as $|x| \to \infty$, at a rate at most equal to the decay rate of $w$.

\textit{Result 2 (\cref{def:linearStability} and surrounding discussion).} As
for \tw{m} existence, linear stability is characterised via firing functions: loosely
speaking, a wave
with firing functions $\tau$ is linearly stable to perturbations $\phi \in \ker L$ if
the firing sets $\FSet_\tau$ and $\FSet_{\tau+\phi}$ are close around $t=0$, and
remain close for all positive times (see also caption to \cref{fig:TildeTau}(c)).

\textit{Result 3 (\cref{lem:twStability} and following discussion).} Linear stability is determined by a complex-valued function $E \colon \DSet_{-\eta,\eta} \to \CSet$, where
$\DSet_{-\eta,\eta} = \{ z \in \CSet \colon -\eta \leq \real z \leq \eta \}$. A
\tw{m} is stable to perturbations of the type $\phi(x) = \Phi e^{\lambda x} + \Phi^*
e^{\lambda^* x}$ (where $\Phi \in \RSet^m$ and the star denotes complex
conjugation) if all nonzero roots $\lambda$ of $E$ have strictly negative real
parts. The function $E$ can be
evaluated using the coarse wave variables $(c,T)$.
\color{black}

\section{Bifurcation structure of travelling waves}\label{sec:bif-structure-TW}

  The pseudo-arclength continuation routines developed in
  \cite{rankin2014continuation,avitabile2020zenodo}
  have been used to compute solutions to \cref{prob:TWm}, continue waves in parameter
  space, and investigate their stability. A \tw{m} is constructed by
  solving \cref{prob:TWm} in the coarse variables $(c,T) \in \RSet_{>0} \times
  \RSet^{m}$, which is sufficient to reconstruct the wave profile~\cref{eq:nuXi}, and
  the corresponding synaptic profile~\cref{eq:sigXi}; 
in addition, starting from a solution to \cref{prob:TWm}, the linear asymptotic
stability of a \tw{m} is determined by finding roots of the $(c,T)$-dependent
nonlinear function $E$ defined in \cref{eq:EDefinition}.

\cref{fig:bif-diag-TW3} shows the bifurcation
structure of \tw{3}, which is common to most travelling waves found in the model.
The simulations in \Cref{sec:discreteModel} suggest to take the synaptic timescale
parameter $\beta$ as the principal continuation parameter. We use the wavespeed $c$
as solution measure. A branch of solutions originates from a grazing point (G, see
below for a more detailed explanation) and it is initially stable, before
destabilising at a sequence of oscillatory bifurcations
($\hb{1}$--$\hb{3}$), as seen in \cref{fig:bif-diag-TW3}(a). In passing, we note that
there exists a second, fully unstable, branch of \tw{3} solutions characterised by a
slower speed and a smaller width.
This branch, which we omit from the bifurcation diagrams for simplicity, also originates at a
grazing point. 

\subsection{Grazing points} In a wide region of parameter space, branches of
\tw{m} solutions originate at a grazing point $\beta = \beta_\textrm{G}$, as
seen in \cref{fig:bif-diag-TW3}(a)--(b) for \tw{3}\footnote{Note that
$\beta_\textrm{G}$ depend on $m$, but we omit this dependence to simplify notation.
The same is true for other quantities in the paper such as $c$ and $T_\textrm{G}$,
for instance.}. At a
grazing point the
\tw{m} profile crosses threshold $m$ times, and attains the threshold
tangentially at a further spatial location, $cT_\textrm{G}$, as shown in
\cref{fig:bif-diag-TW3}(b). This tangency exists at the critical value $\beta =
\beta_\textrm{G}$, signalling a non-smooth transition and a branch termination.
For $\beta > \beta_\textrm{G}$ we observe profiles with exactly $m$ threshold
crossings (a branch of \tw{m} solutions). These profiles exhibit a further local
maximum, which is strictly less than $1$ by construction, at a point
$\xi_\textrm{max} > cT_m$. As $\beta
\to \beta^+_\textrm{G}$, we observe $\xi_\textrm{max} \to cT_\textrm{G}^+$ and
$\nu(\xi_\textrm{max}) \to 1^-$, until the threshold is reached at $\beta =
\beta_\textrm{G}$, where the tangency originates.

For $\beta < \beta_\textrm{G}$, we find solutions to the nonlinear problem
\crefrange{eq:phaseCond}{eq:nuThreshCross} for which $V_m \tau > 1$ in a bounded
interval of $\RSet$. Since these states violate the condition \cref{eq:lessThanOne},
they do not correspond to $\tw{m}$ solutions, and we disregard them (the branch
terminates at $\beta_G$). We note, however, that in a neighbourhood of
$\beta_\textrm{G}$ there exist branches of travelling wave solutions with different
number of threshold crossings (as it will be shown below).
 
We found grazing points for every $\tw m$ with $2 \leq m \leq 230$, for the parameters in
\cref{tab:parameters} with $d_1=0$. We observe that for $\beta < \beta_\textrm{G}$
the system evolves towards a DIFM bump attractor (see \cref{fig:bif-diag-TW3}(e)).
Understanding the origin of this transition is the subject of the following sections.
 
Grazing points are found generically as a secondary control parameter is varied, and
$2$-parameter continuations of grazing points can be obtained numerically, by freeing
one parameter and imposing tangency of the wave profile at one additional point (see
\cref{prob:G} in \cref{sec:supp:TwoParameterContinuation}).

\subsection{Oscillatory bifurcations}
Along the \tw{m} branch, we compute and monitor the roots of $E$ with the largest
real part. \cref{fig:bif-diag-TW3}(c)-(d) show examples
for $\tw{3}$ at $\beta =10$ and $\beta = 16$ respectively. At
$\beta = 10$, we observe a root at $0$, as expected, and other roots with small
negative real part: the wave is therefore linearly asymptotically stable to
firing-threshold perturbations $x \mapsto \Phi e^{\lambda x} + \Phi^* e^{\lambda^*
x}$, with $E(\lambda) = 0$ and $\Phi \in \ker[D - M(\lambda)]$
\textcolor{black}{(see \cref{lem:twStability})}, as confirmed
via simulation in \cref{fig:bif-diag-TW3}(f). In contrast, there exists a pair of
unstable complex conjugate roots for the solution at $\beta = 16$,
indicating an oscillatory (Hopf) instability, which is also confirmed by direct
simulation, in \cref{fig:bif-diag-TW3}(g): after the initial oscillatory instability,
the system destabilises to a \tw{2}. It should be noted that, in other
regions of parameter space and for simulations with different network sizes, we
observed a \tw{3} destabilise to a \tw{1} or the homogeneous steady state. 

We expect that branches of periodically modulated \tw{m} solutions
(which are also supported by neural fields~\cite{Ermentrout:2014bw,Coombes:2014uy})
emerge from each of the Hopf bifurcations reported in \cref{fig:bif-diag-TW3}(a). 
\color{black}
We
note that we could not find stable structures of this type via direct simulations
near the onset of the instability, indicating that the Hopf bifurcations may be
subcritical.
\color{black}
While it
is possible to extend our framework to continue such periodic states, we did not
pursue this strategy here. 

As shown in \cref{fig:bif-diag-TW3}(a), the \tw{3} branch undergoes a sequence of
Hopf bifurcations $\{ \hb{i} \}_i$: our stability analysis shows several
pairs of complex conjugate roots progressively crossing the imaginary axis as
$\beta$ increases: the computation in \cref{fig:bif-diag-TW3}(d), for instance, is for a
solution at $\beta \in (\beta_{\textrm{HB}_1}, \beta_{\textrm{HB}_2})$. We have
verified numerically (not shown) that the firing functions of spatio-temporal DIFM
solutions in this region of parameter behave as predicted by the leading eigenvalues
in \cref{fig:bif-diag-TW3}(d), that is, they feature two dominant oscillatory modes:
one stable, and one unstable.
Similarly to grazing points, 
Hopf bifurcations can be continued in a secondary parameter (see \cref{prob:Hopf}
in~\cref{sec:supp:TwoParameterContinuation}).

\subsection{Nested branches of travelling waves} 
\begin{figure} \label{fig:c-beta-TW1-TW160}
  \centering
  \includegraphics{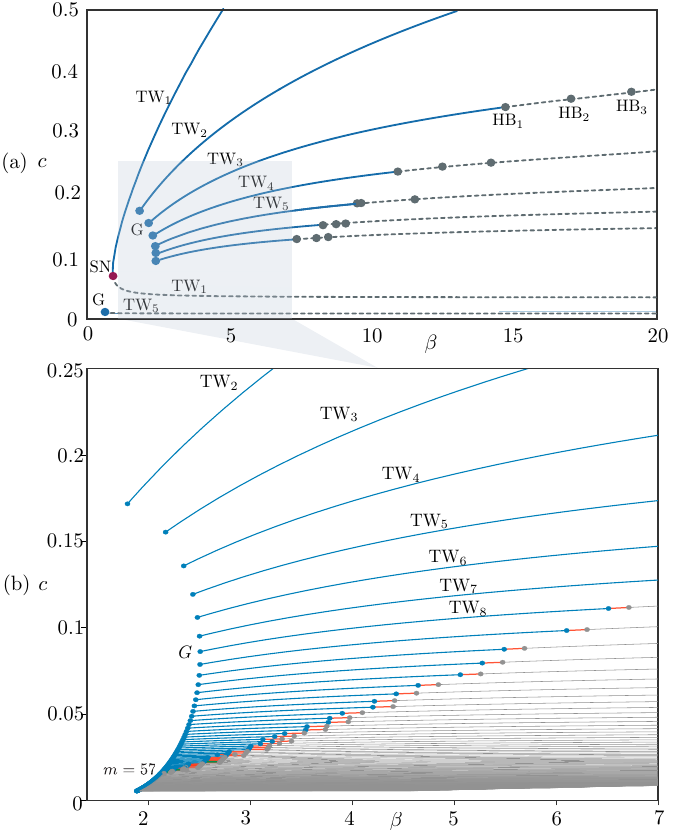}
  \caption{Bifurcation structure of $\tw{m}$ branches for $m=1,\ldots,160$ in the
    parameter $\beta$. (a): For $m \geq 3$, branches are similar to the one shown in
  \cref{fig:bif-diag-TW3}(a). As $m$ increases, the waves become slower and their
  stability region narrower. The shaded area in (a) is enlarged in (b): the
  inset shows selected branches for $m=2,\ldots,160$; oscillatory instabilities occur
  within the red segments \color{black}(connecting a stable solution in blue, to an
  unstable solution in grey),\color{black} and the branches with $m \geq 57$
  are fully unstable (solid grey lines). We used here the same data as in
  \cref{fig:bumpSummary}, but we present it in terms of $c$, not $\Delta$. Parameters
  as in \cref{tab:parameters}, with $d_1 = 0$.} 
\end{figure}

We computed branches of \tw{m} solutions for increasing values of $m$, as reported in
\cref{fig:c-beta-TW1-TW160}(a), using DIFM simulations as initial guesses. In
\cref{fig:bumpSummary} waves were represented by their width, whereas here we use the
propagation speed $c$.
In the
region of parameter space explored in the DIFM model, branches with $m\geq 2$ feature
a grazing point for low
$\beta$, and branches with $m \geq 3$ display sequences of Hopf Bifurcations,
following the scenario already discussed in \cref{fig:bif-diag-TW3}(a). 
In this region, the \tw{1} branch has a distinct behaviour, featuring a saddle node
point in place of a grazing point. For each \tw{m} branch terminating at a grazing point,
there is a corresponding slow unstable branch originating at a different
grazing point: in \cref{fig:c-beta-TW1-TW160}(a) this behaviour is exemplified
by plotting the fully unstable slow $\tw{5}$ branch (the branch with slowest waves in the
figure), but is omitted for all other branches. The two $\tw{5}$ branches should be
understood as a ``broken saddle-node".
The bifurcation structure of \cref{fig:c-beta-TW1-TW160}(a), valid for the CIFM,
supports numerical simulations of the DIFM, in which a $\tw{m}$ destabilises at
\hb{1}, and gives rise to a new travelling wave
state, \tw{m'} with $m'<m$ (see for instance \cref{fig:TildeTau,fig:bif-diag-TW3}). 

These \emph{coexisting \tw{m} branches} are nested in a characteristic fashion, so
far unreported in the literature; the
higher $m$, the slower the wave, and the narrower the stable interval
between $G$ and \hb{1}. This structure is noteworthy: firstly, it is
known that the speed of \tw{1} typically changes \emph{as a secondary parameter is
varied}~\cite{Ermentrout1998c,Bressloff:1999a,Bressloff:2000dq}; however, in networks
with purely excitatory kernels, waves with multiple threshold crossings coexist, and
their speed does not depend strongly on $m$~\cite{Ermentrout1998c}, which has been
a principle reason for studying approximately and analytically the only tractable case,
$m=1$~\cite[Section 5.4]{bressloff2014waves} (this scenario is also confirmed by our
calculations, see~\cref{fig:sup:purelyexcitatory_bif_profiles}); secondly, it is
known that Hopf instabilities with purely excitatory connectivity kernel are possible
only if delays are present in the network~\cite{Bressloff:2000dq}.

The results in \cref{fig:c-beta-TW1-TW160} have been obtained using a methodology
that works for arbitrary $m$, and on generic connectivity kernels. They show that, when
inhibition is present: (i) coexisting nested branches of \tw{m} exist; (ii) the speed
of such waves depends
strongly on $m$, and in particular it is possible to construct waves with arbitrarily
small speed, by increasing the number of spikes; (iii) oscillatory instabilities are
present in models without delays, for sufficiently large $m$ and/or sufficiently large
$\beta$. As we shall see, the latter aspect plays a role in understanding the so
called \emph{bump attractor}.

\begin{figure}\label{fig:graze_m_figs}
  \centering
  \includegraphics{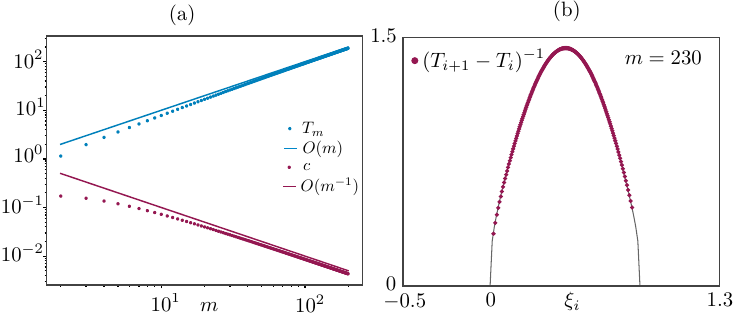}
  \color{black}
  \caption{(a) The quantities $c$ and $T_m$, evaluated at the grazing points $\beta =
    \beta_G$, are $O(m^{-1})$ and $O(m)$,
    respectively. Since $T_1 = 0$ for all waves, the quantity $cT_m$ measures the
    wave width, and we expect the sequence $\{cT_m\}_{m \in \NSet}$, the sequence of
    wave widths, to converge to a fixed value as $m \to \infty$. (b) The solid grey
    line is the spatial firing rate profile proposed in \cite{Laing:2001fc} for
    a non-wandering bump, red dots mark the
    instantaneous firing rate for \tw{230} at the grazing point, computed according
  to the formula $(T_{i+1}-T_i)^{-1}$ at position $x=cT_i$.}
  \color{black}
\end{figure}

\color{black}
\subsection{The bump attractor} 
From the grazing point of \tw{m}, one can compute the grazing point of \tw{m+1}.
For instance, from the \tw{3} grazing profile in \cref{fig:bif-diag-TW3}(b), we
obtain $(c,T_1,T_2,T_3,T_G)$. A grazing point can then be computed
solving~\cref{prob:G}, and its solution can be used to produce an initial
guess $(c,T_1,T_2,T_3, (T_3 + T_G) / 2,T_G)$ for a grazing point of \tw{4}.
Exploiting this iterative strategy, we compute grazing points and branches for
large values of $m$, obtaining the diagram in \cref{fig:c-beta-TW1-TW160}(b),
corresponding to the shaded area in \cref{fig:c-beta-TW1-TW160}(a). 

The branches accumulate as $m$ increases, and for $m \geq 57$, they are fully unstable
for this parameter set. The diagrams provide evidence that there exist unstable
waves with arbitrarily many spikes (i.e., with arbitrarily large $m$) and vanishingly small
speed. It seems therefore natural to postulate a relationship between these waves and
the bump structures found by Laing and Chow~\cite{Laing:2001fc} (see also
\cref{fig:bumpSummary,fig:exampleBumps,fig:bif-diag-TW3}(e)). 

\subsubsection{Spatial profile in non-wandering bumps} 
%
In the CIFM, we inspected travelling wave profiles for \tw{m} solutions at each of
the grazing points where they originate. The leftmost spike of each wave occurs at
$\xi_1=0$ by construction (see \cref{prob:TWm}), while its righmost spike is at
$\xi_m = cT_m$, which is therefore a proxy for the wave's width\footnote{Recall that
$c$ is also a function of $m$, but we omit this dependence for ease of notation.}.
\cref{fig:graze_m_figs}(a) shows $c$ and $T_m$, computed at the grazing points, as functions of
$m$: we find $c = O(m^{-1})$ and $T_m = O(m)$, therefore, we expect the sequence
$\lbrace\xi_m \rbrace_{m \in \NSet}$ to converge to a finite value $\xi_*$ as $m \to
\infty$. 

These data indicate that, as the wavespeed tends to zero, the growing number of spikes
are distributed in a fixed interval $[0,\xi_*]$. Hence, even though there exists no
stationary and spatially heterogeneous CIFM solution for finite $m$ (this
possibility is ruled out by \cref{def:vM}), there is evidence that an $m\to \infty$
limit of \tw{m} solutions exists, has $0$ speed, and displays a spatially
heterogeneous profile, localised in the region $x \in [0,\xi_*]$. Thus, the limiting
state possesses features of the \textit{stationary bumps} that are typically analysed
in continuum neural field models.

\begin{figure}\label{fig:bumpAttractor}
  \centering
  \includegraphics{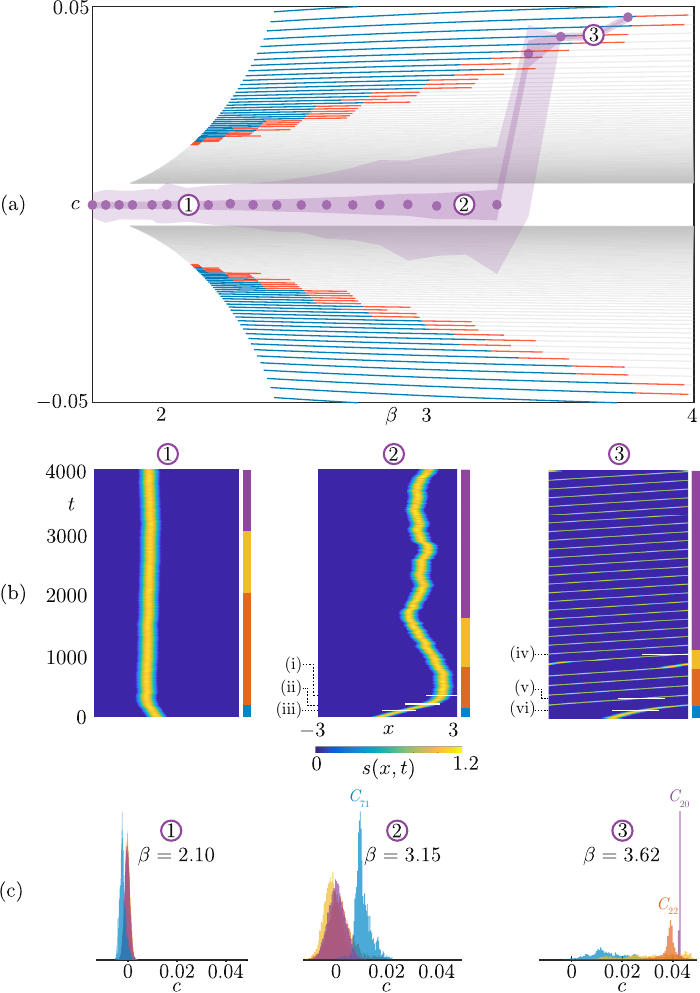}
  \caption{(a): Mean instantaneous speed ($\bar c$ in \cref{eq:statistics}, purple dots) and interval
    estimators ($[\bar c - \sigma_c, \bar c + \sigma_c]$ and
    $[c_\textrm{min},c_\textrm{max}]$, dark and light purple shades, respectively)
    in direct simulations of the DIFM,
    superimposed on \tw{m}
    branches of the CIFM (an inset of \cref{fig:c-beta-TW1-TW160}(b), which has been
    reflected about the $c=0$ axis to signpost waves with negative speed). The bump
    attractor is characterised by $\bar c \approx 0$, and fluctuations in speed
    that grow with $\beta$. (b): Exemplary solutions in (a) displaying an initial advection,
    followed by a bump attractor (1,2) or a stable wave (3). 
  \color{black}
    Snapshots of the
    rastergrams around times (i)--(vi) (white bars in the contour plots) are visible in \cref{fig:waveBumpComp}. (c)
    Histograms of the solution's instantaneous speed, computed in selected time
    intervals, indicated by blue, orange, yellow, and purple bars in (b). Sharp peaks
    indicate proximity of the orbit to a travelling wave, whose speed is indicated on top
    of the peaks ($C_{71}$, $C_{22}$, and
  $C_{20}$ for \tw{71}, \tw{22}, and \tw{20}, respectively), see also
\cref{fig:waveBumpComp}.}
  \color{black}
\end{figure}

To further substantiate this claim, we compare data of the slowest computed wave
(\tw{230} at the grazing point) to data of a non-wandering bump in the DIFM. The
DIFM also does not admit stationary spatially heterogeneous solutions,
but supports non-wandering bump attractors (see \cref{fig:exampleBumps}(a) and
\cref{fig:bif-diag-TW3}(e) for examples). In
such states, the dynamic is not stationary, with many asynchronous firing events
occurring at microscopic level; Laing and Chow noted that this state has
a spatially-dependent firing rate, for which they provide a closed-form
expression. They also showed that their analytical prediction is in agreement with
DIFM simluations of a non-wandering bump
attractor; the firing rate profile is therefore a macroscopic observable of a
non-wandering bump. 

\Cref{fig:graze_m_figs}(b) compares Laing and Chow's firing rate profile to the inverse
inter-spike time $1/(T_{i+1}-T_i)$ in the computed \tw{230}, that is, a proxy for
the firing rate at $x = \xi_i$. The agreement is excellent, confirming that,
from a macroscopic viewpoint, the DIFM bump attractors bear a strong relation to
\tw{m} solutions in the limit of large $m$. 

\subsubsection{Macroscopic observables of wandering bumps} 
We further investigate the bump attractor state in relation to the \tw{m}, away from
the non-wandering limit studied above: the
analysis of the CIFM, in the region of parameter space where the bump
attractor is observed, predicts the coexistence of the trivial
attracting solution $v(x,t) \equiv I$, with arbitrarily slow, unstable waves whose
spatial profile approximates that of a bump. Following the turbulence analogy, we
provide evidence that transient states to the DIFM bump attractor, or the bump
attractor itself, display features of the underlying unstable \tw{m}. We discuss
data for three travelling wave observables: instantaneous speed, instantaneous width,
and firing sets. 

\textbf{Instantaneous speed and width.}
We simulate the DIFM with $n=5,000$, initialising the model from an unstable
travelling wave of the CIFM, \tw{105}, and
estimate the instantaneous speed $c(t)$ of the numerical DIFM solution at $q$
time points $\{t_k \colon k\in \NSet_q\}$, using a level set of the synaptic
profile and finite differences, as follows:
\[
  z(t) = \max \{ x \in \SSet : s(x,t) = 0.1 \}, \qquad
  c_k = (z(t_k) -  z(t_{k-1}))/(t_k - t_{k-1}),
  \qquad
  k \in \NSet_q.
\]
A CIFM travelling wave solution corresponds to a constant $c$: when the DIFM solution
displays a wave for large $n$, 
the sequence $\{c_k\}_k$ converges to a constant value, if one disregards small
oscillations due to the finite $n$, and which vanish as $n \to \infty$. On the other
hand, we expect that no differentiable function $c(t)$ exists for a bump attractor.
However, useful information may be found in the mean, $\overline{c}$, standard deviation, $\sigma_c$, and
extrema, $c_\textrm{min}$, $c_\textrm{max}$, of the \emph{deterministic} scalar $c_k$
\begin{equation}\label{eq:statistics}
  \bar c  = \frac{1}{q} \sum_{k \in \NSet_q} c_k,
  \quad
  \sigma^2_c = \frac{1}{q-1} \sum_{k \in \NSet_q} (c_k-\bar c )^2,
  \quad
  c_\textrm{min} = \min_{k \in \NSet_q} c_k,
  \quad
  c_\textrm{max} = \max_{k \in \NSet_q} c_k.
\end{equation}
These quantities are computed for long simulations ($10,000$ time units)
after an initial transient ($1,000$ time units)
for various values of
$\beta$, and superimposed on the bifurcation
diagram of the CIFM model, in \cref{fig:bumpAttractor}(a): we plot $\bar c$ (purple
dots) and two interval estimators, $[\bar c - \sigma_c, \bar c + \sigma_c]$ (dark
purple shade) and $[c_\textrm{min}, c_\textrm{max}]$ (light purple shade). We recall
that the CFIM admits branches of waves with positive and negative speed, both plotted
in the figure, and that we omit slow unstable waves such as the one in
\cref{fig:c-beta-TW1-TW160}(a). Further, we conjectured above that branches of
unstable waves also exist in the white band around $c=0$.

\Cref{fig:bumpAttractor} shows that the bump attractor dynamics with respect to the variable
$c(t)$ is confined to a region where unstable \tw{m} solutions exist for low and
medium values of $\beta$. A similar behaviour is found for the instantaneous
bump widths, $\Delta(t)$, which can also be estimated from $z(t)$. The macroscopic variable
$\Delta(t)$ does not have large variations \textcolor{black}{within} a bump attractor. As shown in
\cref{fig:bumpSummary}, the average of $\Delta(t)$ for a wandering bump attractor is
located in the region of the bifurcation diagram where unstable \tw{m} are found.
\begin{figure}\label{fig:waveBumpComp}
  \centering
  \includegraphics[width=\textwidth]{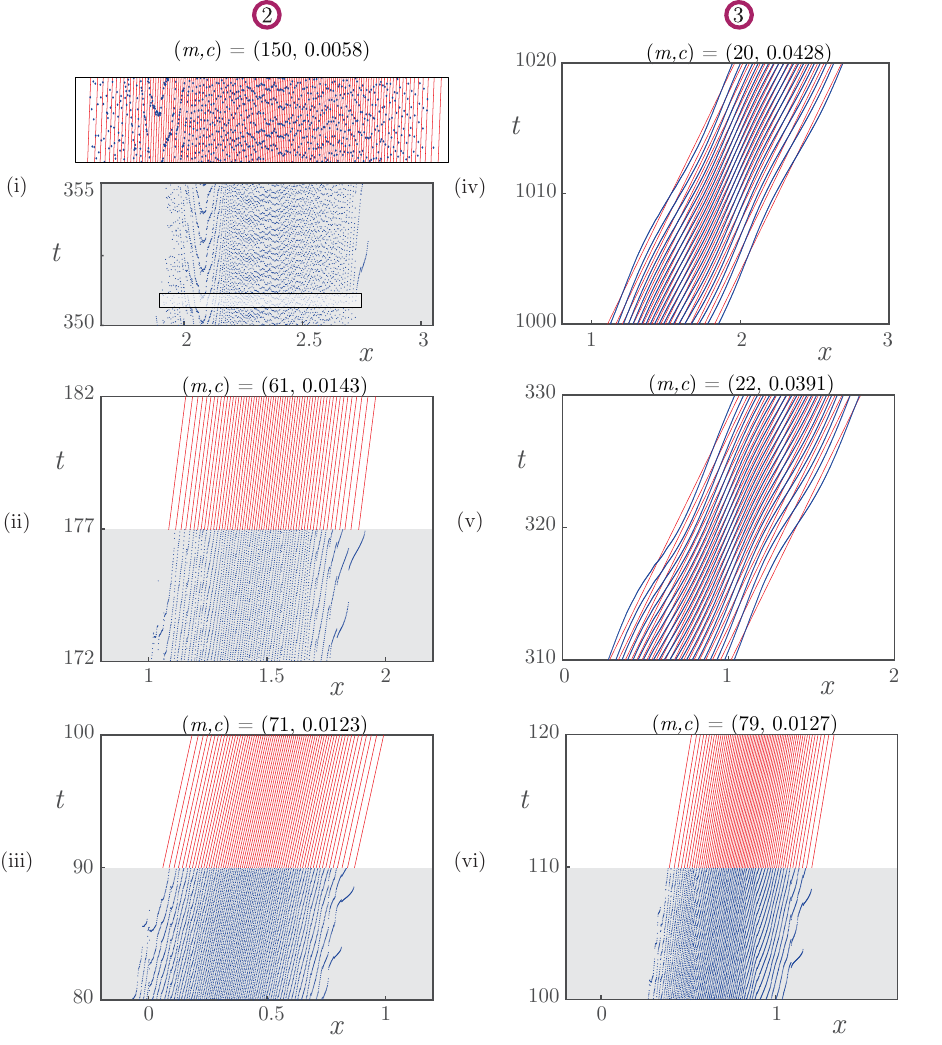}
  \color{black}
  \caption{Firing set of the DIFM solutions (blue dots) and of selected CIFM waves
    (overlayed red lines, with corresponding values of $m$ and $c$) near the times
    marked with a white tick (i)--(vi) in ~\cref{fig:waveBumpComp}(b). The
    firing set (vi) of orbit (3) has a recognisable travelling core which
    progressively loses firing functions at the edges, until it visits the weakly
    unstable \tw{22} and is attracted to the stable \tw{20}. The same initial
    condition with a different $\beta$ value leads to orbit (2). The firing sets
    (ii) and (iii) are qualitatively similar to (vi). The chaotic bump attractor
    (i) has distinctive travelling firing sets at the edges, visible in the grey raster
    plot: firing lines are lost to the right, and new travelling lines are injected
    into the core from the left, through a repeating V-shaped pattern.}
  \color{black}
\end{figure}

For low and medium $\beta$ values, we observe non-wandering and wandering bump
attractors, albeit the fine details of the dynamics depend on initial conditions.
\Cref{fig:bumpAttractor}(b)
shows 3 examples whose estimated average speeds appear also in \cref{fig:bumpAttractor}(a).
The space-time plots display an initial advection, followed by a bump attractor, or a
stable travelling wave. To gain insight into these transitions, we compute histograms of $c_k$
in selected time intervals, indicated by blue, orange, yellow, and purple bars in
\cref{fig:bumpAttractor}(b). Histograms that are sharply peaked around a nonzero
value provide evidence that the solution spends time close to a wave.
For instance, the purple histogram in \cref{fig:bumpAttractor}(c), orbit 3, has
been computed on a long time interval signposted with a purple bar on the right
vertical axis of \cref{fig:bumpAttractor}(b), orbit 3. The colormap of $s(x,t)$
in \cref{fig:bumpAttractor}(b) shows that orbit 3 approaches a stable
travelling wave, and the corresponding purple histogram is indeed close to a
Dirac delta centred at $C_{20}$, the speed of the stable \tw{20}. 

Before settling to \tw{20} the orbit spends time (orange bar in
\cref{fig:bumpAttractor}(b), orbit 3) near the unstable \tw{22}: there is a clear
transition in \cref{fig:bumpAttractor}(b), orbit 3 (after the orange bar), and the
corresponding orange histogram has a tail, but is sharply peaked around $C_{22}$.
This is in line with
with the observation that $c(t)$ has
growing oscillations around $C_{22}$, and indeed \tw{22} is
unstable. Similar considerations apply to \cref{fig:bumpAttractor}(b),
orbit 2, which visits the unstable \tw{71}.
 
\textbf{Firing sets.} In addition to speed, we compare the firing sets of solutions
labelled 2 and 3 in \cref{fig:bumpAttractor} to the ones of selected \tw{m}. The
former are transient solutions, the latter are invariant, and we overlay them in
\cref{fig:waveBumpComp}. The firing set of solution
3 around the time labelled (iv) \cref{fig:bumpAttractor}(b) is
visible in \cref{fig:waveBumpComp}(iv). From the initial condition at \tw{105},
propagating with positive speed, the solution slows down and ``sheds" firing
functions to the right of the profile, while the travelling firing set at the core
persists to oscillatory perturbations. For a visual comparison with CIFM waves, we
overlay in \cref{fig:waveBumpComp}(iv) a \tw{71} solution with a propagation
speed close to the transient. After this strongly nonlinear transient, the solution visits
the weakly unstable \tw{22}: in this transient, the firing set of the
DIFM solution clearly displays the oscillations predicted by the linear stability
theory for \tw{22} (see \cref{fig:waveBumpComp}(v)), before losing 2
further firing curves and being attracted to the stable \tw{20}
(see \cref{fig:waveBumpComp}(iv) and the purple, sharply peaked histogram in
\cref{fig:bumpAttractor}(c), label 3).

Solutions 2 and 3 in \cref{fig:bumpAttractor}(b) both start from \tw{105}, and the
latter displays a similar transient dynamics to the former, with a travelling core
and progressive loss of firing functions (\cref{fig:waveBumpComp}(ii)--(iii)),
accompanied by an increase in propagation speed. The bump attractor alternates phases
with small negative and positive propagation speed, as in \cref{fig:waveBumpComp}. As
expected, it is challenging to single out a matching wave in this highly chaotic
regime, albeit we present a comparison with \tw{150}. The bump still features
distinctive travelling firing sets at the edges, visible in the grey raster plot. The
right edge has a marked alignment of firing events, and some firing curves terminate
as in the other figures. Meanwhile, new firing curves are injected into the core from the
left, through a characteristic, repeated V-shaped pattern. When the bump attractor
propagates slowly with negative speeds, the V-shaped patterns are on the right, and
firing lines are shed on the left (not shown).

\color{black}
\subsection{Composite waves}\label{subsec:compositeWaves}
In addition to the waves studied thus far, we found by direct simulation waves whose
firing functions are split into well-separated groups, that is, firing functions in
the same group are closer to each another than they are to those in other
groups, see~\cref{fig:composite_formation_solution}. We call these structures
\textit{composite waves}, as they may be
formed via the interaction of travelling waves with various numbers of spikes. As in
other non-smooth dynamical systems~\cite{granados2017period}, we expect that
these solutions have discontinuities that are rearranged with respect to a \tw{m}. 

For illustrative purposes, we denote a composite wave with
$k\in\mathbb{N}$ groups by \tw{m_1} + \dots +
\tw{m_k}, where $\{m_i\}_{i=1}^k$ is a sequence of positive
integers specifying the number of spikes in each group. There are constraints
for the groups, dictated by dynamical considerations: for instance a \tw{1} +
\tw{3} cannot exist, because a \tw{1}, taken in isolation, is faster than a \tw{3}.
The construction of asymptotic profiles and computation of linear stability for
composite waves follow in the same way as defined in \cref{sec:TWm} and
\cref{sec:TWstability}.

In \cref{fig:composite_formation_solution}(a), we show a selection of of
composite waves near the \tw{3} branch. Roughly speaking, the wave profile along
each depicted branch comprises a \tw{3} as its leading group, followed by two
additional spike group that collectively form a compound satisfying the
travelling wave conditions (e.g.,~branch 1 combines a \tw{3}, a \tw{2} and a
\tw{1}).  The branches of composite waves are separate from each other and from
the previously computed \tw{m} branches in \cref{fig:c-beta-TW1-TW160}, however,
all branches possess a bifurcation structure similar to the one of the \tw{m}
discussed in the past section.  Moreover, we see that the magnitude of the speed
of the composite wave is bounded above by the magnitude of the speed of the
group at the leading edge of the wave (the slowest wave, \tw{3} in this case).  

%

\begin{figure}
\label{fig:composite_formation_solution}
  \centering
  \includegraphics{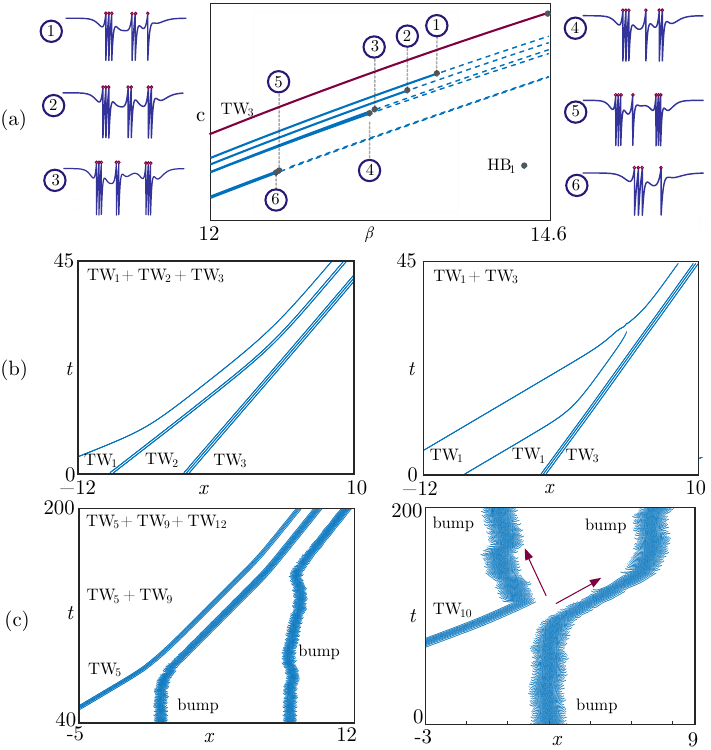}
  \caption{(a) Bifurcation diagram of selected composite waves. The red curve is a \tw{3}
    branch, as computed in \cref{fig:bif-diag-TW3}. The blue curves are branches of
    composite waves, featuring an approximate \tw{3} at the front of the wave. The
    composite waves are slightly slower than \tw{3}. The diagram shows selected
    profiles at the first oscillatory bifurcation points. 
    (b) Examples of composite waves obtained via collisions of multi-spike waves.
    (c) Collisions between $m$-spike propagating structures and wandering bumps
    generate composite waves (left) or bump repulsion (right), depending on initial
    conditions. Simulations in panels (b)--(c) have a lattice spacing of $\Delta x =
  2L/n = 0.01$.}
\end{figure}

Direct numerical simulation highlights that composite waves can be formed from the
interaction of multi-spike waves as shown in the left panel
of \cref{fig:composite_formation_solution}(b). Here we choose an initial
condition with well separated $\tw{1}$, $\tw{2}$ and $\tw{3}$ profiles.
Initially, these separated structures travel with different speeds (\tw{1} being
the fastest and \tw{3} the slowest, in line with what was found in
\cref{fig:c-beta-TW1-TW160}(a)).  After a transient, the waves come closer and
form a compound (the composite wave), with a common intermediate speed.  The
dynamics of composite waves depend greatly on the initial conditions: in the
right panel of \cref{fig:composite_formation_solution}(b), we see that an
initial condition in which a \tw{1} lies between another \tw{1} and a \tw{3}
leads to the extinction of the intermediate wave resulting in a composite wave
with a total of $4$ spikes.

Composite waves can also result from the collision between waves and wandering bumps
(\cref{fig:composite_formation_solution}(c), left panel). Here, we see a
transition of two bump states into a composite wave that is compounded with a
pre-existing \tw{5}. The interaction with the \tw{5} causes the left-most bump
to visit the branches of travelling wave solutions whereupon the combined state
settles on a stable \tw{5} + \tw{9}. This process is repeated for the
right-most bump, giving rise to an overall \tw{5} + \tw{9} + \tw{12}.
In the right panel of the
\cref{fig:composite_formation_solution}(c), we see that the same kind of collision
can instead result in the wave packet transitioning to a wandering bump itself,
highlighting the dependence of the formation of composite waves on initial
conditions. In this scenario, the bump state does not visit a stable
travelling wave branch and so only transiently adopts a weakly unstable wave
profile before returning to a bump attractor state.

\section{Conclusions} \label{sec:conclusions}
We have provided evidence that the relationship between bump attractors and travelling
waves in a classical network of excitable, leaky integrate-and-fire neurons bears
strong similarities to the one between complex spatiotemporal patterns and waves at
the onset of pipe turbulence. 
We made analytical and numerical progress in the
construction and stability analysis of travelling waves with a large number of
localised spikes, and gained access to their intricate bifurcation structure. This
step was essential, because such waves advect, at low speed, localised patterns that
resemble the bump attractor core. It should be noted that the waves we computed are
only a subset of the ones supported by the model. 

As we completed the present paper, a recent publication~\cite{laing2020moving}
reported the existence of waves with vanishingly small speed, and discontinuous
profiles, in networks of theta neurons, which can be cast as spiking networks with a
polynomial ODE of quadratic type. A natural question
arises as to whether the fluid-dynamical analogy applies in that and other network
models. The level-set approach used in the present paper was particularly effective
because one can
define $m$-spike waves starting from mild solutions to the formal evolution equation
\cref{eq:contMod}, and derive a relatively simple expression for the wave
profile~\cref{eq:nuXi}. While this approach may be harder to carry out in more
detailed spiking models, the general idea of a relationship between localised waves
and bumps in spiking networks could be investigated, by direct simulations, in more
realistic networks (spiking or not). 

An important open question concerns the definition of~\cref{eq:contMod} and, more
generally, of spatially-continuous spiking networks, as dynamical systems posed on
function spaces. This problem has been circumvented here by defining a suitable class
of solutions, introducing the voltage mapping, and then providing proofs of its
relevance to the construction and stability of multiple-spike waves. We believe that
a full dynamical-systems characterisation of similar models will be a key ingredient to
uncover further links between localised waves and bumps in complex,
spatially-extended threshold networks.

\section*{Acknowledgments}
We are grateful to Stephen Coombes, 
Predrag Cvitanovi\'c,
Gregory Faye,
Joel Feinstein,
John Gibson,
Joost Hulshof, 
Rich Kerswell, and
Edgar Knobloch
for insightful discussions.

\appendix

\begin{table}[h!]
  \caption{Parameter descriptions and nominal values for the Discrete
  Integrate-and-Fire Model.}
  \label{tab:parameters}
  \centering
  \begin{tabular}{lcc}
    \toprule
    Parameter                                     & Symbol               & Value(s)  \\ \otoprule
    Number of neurons                             & $n$                  &
    \{80,500,1000,5000\}       \\ 
    Domain half-width                             & $L$                  &  \{1,3,4\} \\
    Synaptic efficacy and time scale              & $\beta$              &  [0,25]    \\
    Synaptic excitation coefficient               & $a_1$                &  11        \\
    Synaptic inhibition coefficient               & $a_2$                &  7         \\
    Synaptic excitation spatial scale             & $b_1$                &  5         \\
    Synaptic inhibition spatial scale             & $b_2$                &  3.5       \\
    Constant external input                       & $I  $                &  0.9       \\
    Time-dependent external input duration        & $\tau_\textrm{ext} $ &  2         \\
    Time-dependent external input strength        & $d_1$                & \{0,2\}    \\
    Time-dependent external input spatial scale   & $d_2$                & \{10,12\}  \\
    \bottomrule
  \end{tabular}
 \end{table}

\section{Synaptic, Reset, and Voltage mappings}\label{supp:SRVMappings}
The following lemma shows that the synaptic contribution is a continuous function on
the plane, hence discontinuities in the voltage come through the reset operator, as
expected. It also provide domains and codomains for the Synaptic, Reset, and Voltage
mappings.
\begin{lemma}\label{supp:prop:SRMappings}
  If \cref{hyp:synapticFunctions} holds, then for the operators $S$, $R$ in \cref{def:SR}
  we have $S \colon C(\RSet) \to BC(\RSet^2)$ and $R \colon C(\RSet) \to B(\RSet^2)$,
  respectively.
\end{lemma}
\begin{proof}
  Fix $u \in C(\RSet)$. The real-valued function $z \mapsto \exp(-z)H(z)$ is
  bounded in $\RSet$, hence $Ru \in B(\RSet^2)$. To prove the result on $S$, we define
  the functions
  \[
    \psi(y,t) = \int_\infty^t e^{z-t} \alpha(z-u(y))\,dz, \qquad
    s(x,t) = (Su)(x,t) = \int_{-\infty}^\infty w(x-y) \psi(y,t)\, dy,
  \]
  and set $K_\alpha = \Vert \alpha \Vert_\infty$, $K_w = \Vert w \Vert_{L^1(\RSet)}$,
  whose existence is guaranteed by \cref{hyp:synapticFunctions}. The function $s$ is
  bounded on $\RSet^2$ because $|\psi| \leq K_\alpha$ on $\RSet^2$, hence
  \[
    |s(x,t)| \leq K_\alpha \int_{-\infty}^\infty
    |w(x-y)|\, dy \leq K_\alpha K_w.
  \]

  In order to prove the continuity of $s$, it is useful to first show that $\psi$ is
  continuous in $t$ on $\RSet$, uniformly in $y$. This claim is proved by noting that, for any $y,t,\tau
  \in \RSet$ with $t < \tau$ we have
  \[
    \begin{aligned}
    | \psi(y,\tau) - \psi(y,t) | 
	&= \bigg| \int_{-\infty}^t [ e^{z-\tau} - e^{z-t}] \alpha(z-u(y)) \, dz + 
		\int_{t}^\tau e^{z-\tau} \alpha(z-u(y)) \, dz  
	  \bigg| \\
	& \leq K_\alpha \int_{-\infty}^t e^{z-t} - e^{z-\tau}\, dz 
	                      + K_\alpha(\tau - t) \\
        & = K_\alpha \big( 1 - e^{-(\tau-t)} + \tau - t \big),
    \end{aligned}
  \]
  which, combined with a similar argument for $\tau < t$, leads to
  \begin{equation}\label{eq:psiBoundApp}
    | \psi(y,\tau) - \psi(y,t) | \leq K_\alpha \big( 1 - e^{-|\tau-t|} + |\tau - t| \big),
    \qquad \text{for all $y,t,\tau \in \RSet$}.
  \end{equation}
  We prove the continuity of $s$ by showing that $|s(\xi,\tau) - s(x,t) | \to 0$ as
  $(\xi,\tau) \to (x,t)$. We consider the following inequality
  \begin{equation}\label{eq:sBoundApp}
    |s(\xi,\tau) - s(x,t) | \leq |s(\xi,\tau) - s(x,\tau) | + |s(x,\tau) - s(x,t) |,
  \end{equation}
  and we note that the second term in the right-hand side of \cref{eq:sBoundApp}
  can be made arbitrarily small as $(\xi,\tau) \to (x,t)$, owing to \cref{eq:psiBoundApp}.
  Therefore it suffices to show that the first term in the right-hand side of
  \cref{eq:sBoundApp} can also be made arbitrarily small as $(\xi,\tau) \to (x,t)$,
  that is, we must show that for any $\eps >0$ there exists $\delta >0$ such that 
  $|s(\xi,\tau) - s(x,\tau) | < \eps$. To prove this statement, we use the
  boundedness of $\psi$ and a change of variables in the integral to obtain the
  estimate
  \[
    \begin{aligned}
    |s(\xi,\tau) - s(x,\tau) | 
    & \leq \int_{-\infty}^\infty | w(y-\xi) - w(y-x)| \, |\psi(y,\tau) | \, dy \\
    & \leq K_\alpha \int_{-\infty}^\infty | w(y+x-\xi) - w(y)| \, dy,  \\
    \end{aligned}
  \]
  therefore, for any $X >0$ we have
  \[
    \begin{aligned}
    |s(\xi,\tau) - s(x,\tau) | 
    & \leq K_\alpha \int_{-\infty}^{-X} | w(y+x-\xi) - w(y)| \, dy  \\
    & + K_\alpha \int_{-X}^{X} | w(y+x-\xi) - w(y)| \, dy  \\
    & + K_\alpha \int_{X}^{\infty} | w(y+x-\xi) - w(y)| \, dy := K_\alpha
    (I_1+I_2+I_3).
    \end{aligned}
  \]
  We bound $I_3$ as follows
  \[
    \begin{aligned}
      I_3 
      & \leq \int_{X}^{\infty} | w(y+x-\xi)| \, dy + \int_{X}^{\infty} | w(y)| \, dy  \\
      & = \int_{X+x-\xi}^{\infty} | w(y)| \, dy + \int_{X}^{\infty} | w(y)| \, dy  
      \leq 2 \int_{X-|x-\xi|}^{\infty} | w(y)| \, dy 
    \end{aligned}
  \]
  and a similar reasoning gives an identical bound for $I_1$,
  \[
      I_1 \leq 2 \int_{X-|x-\xi|}^{\infty} | w(y)| \, dy.
  \]
  We conclude that, for any $X > 0$
  \[
  |s(\xi,\tau) - s(x,\tau) | \leq 4K_\alpha \int_{X-|x-\xi|}^{\infty} | w(y)| \, dy
  + 
    K_\alpha \int_{-X}^{X} | w(y+x-\xi) - w(y)| \, dy. 
  \]
  We now fix $\eps, \delta_1 >0$. Since $w \in L^1(\RSet)$, we can pick $X$ so that
  \[
    |x-\xi| < \delta_1 \qquad \Rightarrow \qquad 4K_\alpha \int_{X-|x-\xi|}^{\infty}
    | w(y)| \, dy < \frac{\eps}{2}.
  \]
  Furthermore, by continuity of $w$ there exists $\delta_2 > 0$ such that
  \[
    |x-\xi| < \delta_2 \qquad \Rightarrow \qquad K_\alpha \int_{-X}^{X} | w(y+x-\xi)
    - w(y)| \, dy \leq \frac{\eps}{2},
  \]
  hence for any $\eps >0$ there exists $\delta = \min(\delta_1,\delta_2) >0$ such
  that $|s(\xi,\tau) - s(x,\tau) | < \eps$, which implies the continuity of $s$. We
  conclude that $S \colon C(\RSet) \to BC(\RSet^2)$.
\end{proof}

\section{Discontinuities of $v_m$}\label{supp:proof:cor:limits}
In some cases it is useful to replace the threshold conditions
\cref{eq:vCrossings,eq:voltageMappingThresholds} by equivalent conditions
involving left limits of the voltage function and mapping, respectively, as specified
by the following result.
\begin{corollary}[Discontinuities of $v_m$]\label{cor:limits}
Under the hypotheses of
\cref{supp:prop:SRMappings}, $v_m = 1$ in $\FSet_\tau$ if, and only if,
  $
    \lim_{\mu \to 0^+} v_m(x,\tau_i(x)-\mu) = v_m(x,\tau_i(x)^-) = 1
  $
  for all $(i,x) \in \NSet_m \times \RSet$.
\end{corollary}
\begin{proof}
  The condition
  $v_m(x,t) = 1$ for $(x,t) \in \FSet_\tau$ is equivalent to
  \[
    v_m(x,\tau_i(x)) = 1 \qquad (i,x) \in \NSet_m \times \RSet.
  \]
  From the defintions of $S$ and $R$, and the continuity $S\tau_i$ on $\RSet^2$ (see
  \cref{supp:prop:SRMappings}) we have, for all $(i,x) \in \NSet_m \times \RSet$
  \[
    \begin{aligned}
    v_m(x,\tau_i(x)) 
    & = I + \sum_{j \in \NSet_m} (S\tau_j)(x,\tau_i(x))  
      - \sum_{j < i} \exp(\tau_j(x) - \tau_i(x)) \\
    & = I + \sum_{j \in \NSet_m} \lim_{\mu \to 0^+} (S\tau_j)(x,\tau_i(x)-\mu) 
      - \sum_{j < i} \exp(\tau_j(x) - \tau_i(x)) \\
    & = I + \lim_{\mu \to 0^+} 
 	  \sum_{j \in \NSet_m} 
          \bigg[ 
	  (S\tau_j)(x,\tau_i(x)-\mu)  
	  -
          (R\tau_j)(x,\tau_i(x)-\mu)  
 	 \bigg] = v_m(x,\tau_i(x)^-),
    \end{aligned}
  \]
  hence $v_m = 1$ in $\FSet_\tau$ if, and only if, 
  $v_m(x,\tau_i(x)^-) = 1$  for all $(i,x) \in \NSet_m \times \RSet$.
\end{proof}

\section{Proof of \cref{prop:nu}}\label{supp:proof:prop:nu}
\begin{proposition}[\tw{m} profile]
  A \tw{m} with speed $c$ satisfies $(V_m\tau)(x,t)= \nu_m(ct-x;c,T)$,
  and its $(c,T)$-dependent travelling wave profile $\nu_m$ is given by
  \begin{equation} \label{eq:nuXiSupp}
  \begin{aligned}
    \nu_m(\xi;c,T) 
    = I  & - \sum_{j \in \NSet_m}
    \exp\bigg( -\frac{\xi -cT_j}{c} \bigg) H\bigg(\frac{\xi- c T_j}{c}\bigg) \\ 
    & + \frac{1}{c} \sum_{j \in \NSet_m}
    \int_{-\infty}^\xi \exp\bigg( \frac{z-\xi}{c} \bigg) \int_0^\infty
    w(y-z+cT_j) p(y/c) \, dy \,dz.
   \end{aligned}
   \end{equation}
\end{proposition}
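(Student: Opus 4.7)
The plan is to substitute the linear firing functions $\tau_j(x)=x/c+T_j$ into the definition of the voltage mapping \cref{eq:voltageMapping}, evaluate the synaptic and reset contributions separately, and perform a change of variables that exposes the travelling-wave coordinate $\xi=ct-x$. By \cref{def:voltageMapping,def:SR}, we have $(V_m\tau)(x,t)=I+\sum_{j\in\NSet_m}\bigl[(S\tau_j)(x,t)+(R\tau_j)(x,t)\bigr]$, so it suffices to treat one summand at a time.

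First, the reset term is immediate: $(R\tau_j)(x,t)=-\exp(x/c+T_j-t)H(t-x/c-T_j)$, and rewriting $x/c+T_j-t=-(\xi-cT_j)/c$ and $t-x/c-T_j=(\xi-cT_j)/c$ recovers the first sum in \cref{eq:nuXiSupp}. This already shows that the reset contribution depends on $(x,t)$ only through $\xi$.

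Next, for the synaptic term, I would use $\alpha(s)=p(s)H(s)$ from \cref{hyp:synapticFunctions} to restrict the inner integration domain in $(S\tau_j)(x,t)$ to $\{\eta\in\RSet:\eta\leq c(\zeta-T_j)\}$, where $\zeta$ is the dummy time variable. Then I would apply the substitution $y=c(\zeta-T_j)-\eta$ in the inner integral, under which $\zeta-\eta/c-T_j=y/c$ and $x-\eta=x-c(\zeta-T_j)+y$, turning the integrand into $e^{\zeta-t}w\bigl(y-c(\zeta-T_j)+x\bigr)p(y/c)$ with $y\in[0,\infty)$. Finally, I would substitute $z=c\zeta-x$ in the outer integral, which gives $d\zeta=dz/c$, aligns the upper limit with $\xi=ct-x$, and rewrites $\zeta-t=(z-\xi)/c$ and $x-c(\zeta-T_j)+y=y-z+cT_j$. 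The result is exactly the second sum in \cref{eq:nuXiSupp}, divided by $c$.

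Combining the two contributions gives $(V_m\tau)(x,t)=\nu_m(\xi;c,T)$, as claimed. The changes of variables are routine and are justified by the improper-integral estimates established in \cref{prop:SRMappings}, which guarantee absolute convergence of the double integral defining $S\tau_j$ under \cref{hyp:synapticFunctions}; since there is no difficulty with the Heaviside cutoffs (they only delimit the domain of integration), the only bookkeeping step that requires care is the correct orientation of the substitution $y=c(\zeta-T_j)-\eta$, which reverses the direction of integration and hence flips the sign of $d\eta$ against the swapped limits. No step is conceptually hard; the main obstacle is merely keeping the two successive changes of variables consistent and verifying that the exponential and connectivity factors collapse to the claimed form.
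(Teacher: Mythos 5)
Your proof is correct and follows essentially the same route as the paper's: evaluate $R$ and $S$ at the linear firing functions, use the Heaviside factor in $\alpha$ to truncate the inner integral, and perform the inner substitution $y=c(\zeta-T_j)-\eta$ followed by the outer substitution $z=c\zeta-x$ to expose $\xi=ct-x$. The paper carries out exactly these two changes of variables, so no further comment is needed.
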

\begin{proof} 
  We set $\tau_j(x) = x/c + T_j$ for $j \in \NSet_n$. The first sum
  in~\cref{eq:nuXiSupp}
  is immediate, as
  \[
   (R\tau_j)(x,t) = \exp\bigg( \frac{cT_j - ct + x}{c} \bigg) H\bigg(\frac{ct -x -
   c T_j}{c}\bigg)
  \]
  The second sum is obtained as follows:
\[
 \begin{aligned}
   (S\tau_j)(x,t) 
    & = \int_{-\infty}^t \int_{-\infty}^\infty \exp(s-t)
    w(x-y) \alpha(s-\tau_j(y)) \, dy \, ds \\
    & = \int_{-\infty}^t \int_{0}^{\infty} \exp(s-t)
    w(x+y-cs+cT_{j}) p(y/c) \, dy \, ds \\
    & =\frac{1}{c} 
    \int_{-\infty}^{ct-x} \int_{0}^{\infty}\exp\bigg(\frac{s+x-ct}{c}\bigg) w(y-s+cT_{j})
    p(y/c) \, dy \, ds .
  \end{aligned}
 \]
\end{proof}

\section{Travelling wave stability} \label{supp:sec:linearVM}

We begin by showing that if two distinct $m$-spike solutions have firing functions
$\tau$ and $\tau+\phi$, respectively, then the perturbations $\phi$ satisfy a linear
equation to leading order. The following lemma also specifies admissible
perturbations, namely $\phi$ are in the Banach space $C_\eta(\RSet,\RSet^m)$:
perturbations are allowed to grow exponentially as $|x| \to
\infty$, at a rate at most equal to $\eta$, which bounds the decay rate of the
connectivity kernel function $w \in L^1_\eta(\RSet)$.

\begin{lemma}[Linearisation of the voltage mapping operator]\label{lemma:linearVM}
  Assume \cref{hyp:synapticFunctions}, and let $(c,T)$ be the coarse variables of a
  \tw{m} with firing functions $\tau$. Further, let $L$ be the linear operator defined
  by $L\phi = \big( (L\phi)_1,\ldots,(L\phi)_m \big)$, where
  \[
    (L\phi)_i =\sum_{j \in \NSet_m} (\phi_i - \phi_j) 1_{j<i} 
    + \int_{cT_{ji}}^\infty
    e^{-y/c} w(y)  \psi_{ij}(y) \big[ \phi_i - \phi_j(\blank - y) \big] \, dy,
    \quad i \in \NSet_m,
  \]
  with coefficients $T_{ij}$ and functions $\psi_{ij}$ given by
  \[
    T_{ij} = T_i - T_j, \qquad \psi_{ij} \colon 
    [cT_{ij},\infty) \to \RSet, \quad y \mapsto p(0) + \int_0^{y/c-T_{ji}}
    e^s p'(s)\, ds, \qquad i,j \in \NSet_n,
  \]
  respectively. The following statements hold:
  \begin{enumerate}
    \item $L$ is a bounded operator from $C_\eta(\RSet,\CSet^m)$ to itself.
    \item Let $0 < \eps \ll 1$ and $\phi \in C_\eta(\RSet,\RSet^m)$. If $\tau + \eps
      \phi$ are firing functions of an $m$-spike CIFM solution (a perturbation of the \tw{m}), then
  \begin{equation}\label{supp:eq:LPhi}
    0 = L \phi + O(\eps) \qquad \text{in $\RSet$}
  \end{equation}
  \end{enumerate}
\end{lemma}
\begin{proof}
  \emph{Part 1}. If $\phi \in C_\eta(\RSet,\CSet^m)$, then $\phi \in
  C(\RSet,\CSet^m)$ and $L\phi \in C(\RSet,\CSet^m)$. We
  show that for any $\phi \in C_\eta(\RSet,\CSet^m)$ there exists a positive constant
  $\kappa_{m,\eta}$ such that $ \Vert L\phi \Vert_{C_{m,\eta}}\leq \kappa_{m,\eta}
  \Vert \phi \Vert_{C_{m,\eta}}$, which implies that $L$ is
  a bounded operator from $C_\eta(\RSet,\CSet^m)$ to itself. We begin by estimating
  $\psi_{ij}$: by \cref{hyp:synapticFunctions} there exist constants $K_p$, $K_{p'}$
  such that
  \[
    \begin{aligned}
    | \psi_{ij}(y) | & \leq |p(0)| + \int_0^{y/c-T_{ji}} e^z |p'(z)|\, dz  \\
		     & \leq K_p + K_{p'}\int_0^{y/c-T_{ji}} e^z \, dz  
		     = K_p + K_{p'}(e^{y/c-T_{ji}}-1),
    \end{aligned}
  \]
  therefore, introducing the constant $K = 2\max(K_p, K_{p'}) \max_{i,j}
  e^{-T_{ji}}$,
  \begin{equation}\label{supp:eq:PsiEst}
      e^{-y/c} | \psi_{ij}(y) | \leq e^{-y/c} ( K_p + K_{p'}e^{y/c-T_{ji}}) \leq K
  \end{equation}
  uniformly in $(i,j,y) \in \NSet_m \times \NSet_m \times [cT_{ji},\infty)$.

  We now fix $\phi \in C_\eta(\RSet,\CSet^m)$, $x \in \RSet$, and estimate
  \begin{equation}\label{supp:eq:LPhiEst}
    \begin{aligned}
    |(L\phi)(x)|_{\infty} 
    & \leq 
        \max_{i \in \NSet_m} \sum_{j \in \NSet_m} \Big( | \phi_i(x) | + | \phi_j(x)
	| \Big)  \\
    & +
        \max_{i \in \NSet_m} \sum_{j \in \NSet_m} 
    \int_{cT_{ji}}^\infty
  e^{-y/c} | w(y)  \psi_{ij}(y)|\; | \phi_i(x) - \phi_j(x - y)| \, dy.
    \end{aligned}
  \end{equation}
  For the first summands in \cref{supp:eq:LPhiEst} we find 
  \begin{equation}\label{supp:eq:firstSum}
    | \phi_i(x) | + | \phi_j(x) | \leq 2 |\phi(x)|_{\infty} \leq 2 e^{\eta |x|}
    \Vert \phi \Vert_{C_{m,\eta}} 
  \end{equation}
  For the second summands in \cref{supp:eq:LPhiEst}, we estimate
  \begin{equation}\label{supp:eq:secondSum}
    \begin{aligned}
    \int_{cT_{ji}}^\infty e^{-y/c} 
    &
    | w(y)  \psi_{ij}(y)|\; | \phi_i(x) - \phi_j(x - y)| \, dy 
		     && \qquad \text{(by \cref{supp:eq:PsiEst})} \\
    & \leq K \int_{cT_{ji}}^\infty 
                     | w(y) |\, |\phi_i(x) - \phi_j(x - y)| \, dy  \\
    & \leq K \Vert \phi \Vert_{C_{m,\eta}} \int_{cT_{ji}}^\infty
		|w(y)| \big( e^{\eta |x|} + e^{\eta|x-y|} \big)\, dy \\
    & \leq K e^{\eta |x|} \Vert \phi \Vert_{C_{m,\eta}} 
       \int_{cT_{ji}}^\infty |w(y)| \big( 1 + e^{\eta |y|} \big)\, dy \\
    & \leq K e^{\eta |x|} \Vert \phi \Vert_{C_{m,\eta}}
      \bigg( \Vert w \Vert_{L^1_\eta} +
	      \int_{cT_{ji}}^\infty
	      |w(y)| e^{\eta |y|}\, dy 
      \bigg)
                             && \qquad \text{($w$ even)} \\
    & \leq K e^{\eta |x|} \Vert \phi \Vert_{C_{m,\eta}}
      \bigg( \Vert w \Vert_{L^1_\eta} +
	2 \int_{-\infty}^\infty |w(y)| e^{\eta y}\, dy 
      \bigg)
      \\
    & \leq 3 K \Vert w \Vert_{L^1_\eta} e^{\eta |x|} \Vert \phi \Vert_{C_{m,\eta}}
    \end{aligned}
  \end{equation}
  Combining \crefrange{supp:eq:LPhiEst}{supp:eq:secondSum} we obtain
  \[
    \begin{aligned}
      \Vert L \phi \Vert_{C_{m,\eta}} 
      & = \sup_{x \in \RSet} e^{-\eta |x|} |L\phi(x)|_{\infty} \\
      & \leq m(2 + 3K \Vert w \Vert_{L^1_\eta}) \Vert \phi \Vert_{C_{m,\eta}}
      & := \kappa_{m,\eta} \Vert \phi \Vert_{C_{m,\eta}},
    \end{aligned}
  \]
  which concludes the proof of part 1.

  \emph{Part 2}. We set $u = \tau + \eps \phi \in C_\eta(\RSet,\RSet^m)$, for $0 <
  \eps \ll 1$. By
  main hypothesis $u$ and $\tau$ are firing functions of two distinct $m$-spike
  CIFM solutions. We claim that this implies \Cref{supp:eq:LPhi}. Indeed, since $u$ is a
  firing function, then $V_m u = 1$ on $\FSet_u$, that is
  \begin{equation}\label{supp:eq:step1}
    1 = I + \sum_{j \in \NSet_m} 
    \Big(
    (Su_i)(x,u_j(x)) + (Ru_j)(x,u_i(x)) 
    \Big),
    \qquad (i,x) \in \NSet_m \times \RSet.
  \end{equation}
  We obtain
  \[
    \begin{aligned}
    (Su_j)(\blank,u_i) 
    & 
    = \int_{-\infty}^\infty w(\blank - y) \int_{-\infty}^0 e^z
				    \alpha(z+u_i - u_j(y)) \, dz \, dy \\
    &
    = (S\tau_j)(\blank,\tau_i) + \eps \int_{-\infty}^\infty w(\blank -y) 
	  \big( \phi_i - \phi_j(y) \big) 
	  \int_{-\infty}^0 e^z \alpha'(z+\tau_i - \tau_j(y)) \, dz \, dy \\
	  & + O(\eps^2),
    \end{aligned}
  \]
  where we have denoted by $\alpha' = p' H + p \delta$ the distributional derivative
  of $\alpha$. We now manipulate the integral in the previous equation as follows
  \[
    \begin{aligned}
    \int_{-\infty}^\infty 
         &
	  w(\blank -y) 
	  \big( \phi_i - \phi_j(y) \big) 
	  \int_{-\infty}^0 e^z \alpha'(z+\tau_i - \tau_j(y)) \, dz \, dy  \\
    &
    =
    \int_{-\infty}^\infty w(\blank -y) 
	  \big( \phi_i - \phi_j(y) \big) 
	  \int_{-\infty}^{\tau_i -\tau_j(y)} e^{z+ \tau_i - \tau_j(y)} \alpha'(z) 
								\, dz \, dy  \\
    &
    =
    \int_{-\infty}^\infty w(\blank -y) 
	  \big( \phi_i - \phi_j(y) \big) 
	  \int_{-\infty}^{\tau_i -\tau_j(y)} e^{z+ \tau_i - \tau_j(y)} 
	  \big( p'(z)H(z) + p(z) \delta(z) \big)
								\, dz \, dy  \\
    &
    =
    \int_{-\infty}^{\tau^{-1}_j(\tau_i)}
    w(\blank -y) 
	  \big( \phi_i - \phi_j(y) \big) 
	  e^{\tau_j(y) - \tau_i} 
	  \Big(
	    p(0) + \int_0^{\tau_i-\tau_j(y)} 
	    e^z p'(z) \, dz
	  \Big) \,dy \\
  \end{aligned},
  \]
  hence for all $i,j \in \NSet_m$ we obtain
  \[
    (Su_j)(\blank,u_i)  = (S\tau_j)(\blank,\tau_i)  
    + \eps e^{T_{ji}} 
    \int_{-\infty}^\infty e^{-y/c} w(y)  \psi_{ij}(y) 
    \big[ \phi_i - \phi_j(\blank- y) \big] \,dy + O(\eps^2)
  \]
  For the reset operator, we obtain, for all $i,j \in \NSet_m$
  \[
  \begin{aligned}
    Ru_j(\blank,u_i^-) & = -\lim_{\kappa \to 0^+} \exp(-u_i + \kappa + u_j) H(u_i -\kappa - u_j) \\
    & = R \tau_j(\blank,\tau_i^-) + \eps \lim_{\kappa \to 0^+} \exp(-T_{ij} + \kappa) (\phi_i - \phi_j) H(T_{ij}) + O(\eps^2) \\
    & = R \tau_j(\blank,\tau_i^-) + \eps \exp(T_{ji}) (\phi_i - \phi_j) 1_{j<i} +
    O(\eps^2).
  \end{aligned}
  \]
  Combining \cref{supp:eq:step1} with the expansions obtained for $S$ and $R$, exploiting
  the condition $V_m\tau = 1$ on $\FSet_\tau$, and dividing by $\eps e^{T_{ji}}$ we
  obtain
  \[
    0 = (L\phi)_i + O(\eps) \qquad \text{on $\RSet$, for all $i \in \NSet_m$}.
  \]
  which implies \Cref{supp:eq:LPhi}. 
\end{proof}

\begin{remark}
  Note that the operator $L$ depends on the coarse variables $(c,T)$, albeit we omit
  this dependence for notational simplicity.
\end{remark}

We are now ready to define linear stability for a \tw{m}, which we
adapt from \cite{Bressloff:2000dqrep}. Intuitively, we compare the firing set
$\FSet_\tau$ of a \tw{m} with the firing set $\FSet_{\tau+\phi}$ of a perturbed
$m$-spike solution with $\Vert \phi \Vert_{C_{m,\eta}} \ll 1$, for which $\phi$
satisfy \cref{supp:eq:LPhi} to leading order. If the sets $\FSet_\tau$ and $\FSet_{\tau+\phi}$
are close around $t=0$ and remain close for all positive times, we deem the wave
linearly stable. With reference to 
\cref{fig:TildeTau}(c), we observe that,
when \tw{m} crosses the axis $t=0$, each one of its firing functions $\tau_i$ is
perturbed by an amount $\phi_i(-cT_i)$. Roughly speaking, a \tw{m} is linearly stable
if $\phi_i(-cT_i)$ being small implies that $\phi_i(x)$ stays small for all $x \in
(-cT_i,\infty)$ and $i \in \NSet_m$. If a wave is linearly stable and all $\phi_i$
decay to $0$ as $x \to \infty$ we say that the wave is asymptotically linearly
stable. More precisely:

\begin{definition}[Linear stability of \tw{m}]
  \label{def:linearStability}
  A \tw{m} with coarse varaibles $(c,T)$ is linearly stable to perturbations $\phi$
  if $\phi \in \ker L$, and for each $\eps >0$ there exists $\delta = \delta(\eps)
  >0$, such that if $|\phi_i(-cT_i)| < \delta$, then $|\phi_i(x)| < \eps$
  for all $(i,x) \in \NSet_m \times (-cT_i,\infty)$.

  A \tw{m} is asymptotically linearly stable to perturbations $\phi$ if is linearly
  stable to perturbations $\phi$ and $| \phi (x) |_{\infty} \to 0$ as $x \to
  \infty$.
\end{definition}

We have seen that a \tw{m} can be constructed by solving a nonlinear problem in the
unknowns $(c,T)$. The following lemma, which is the central result of this section,
establishes that linear stability of a \tw{m} with respect to exponential
perturbations of the firing functions can also be determined by finding roots
of a $(c,T)$-dependent, complex-valued function.

%

\begin{lemma}[\tw{m} stability]\label{lem:twStability}
  Assume \cref{hyp:synapticFunctions}, let $(c,T)$ be coarse variables of a
  \tw{m}, and let $\DSet_{a,b} = \{ z \in \CSet \colon a \leq \real z \leq b
  \}$. Further, let $E$ be the complex-valued function
  \begin{equation}\label{eq:EDefinition}
    E \colon \DSet_{-\eta,\eta} \to \CSet, \quad z \mapsto \det[ D - M(z) ],
  \end{equation}
  where 
  $M \in \CSet^{m \times m}$, $D = \diag(D_1, \ldots,D_m) \in \RSet^{m \times m}$, 
  are the matrices with elements
  \[
    M_{ij}(z)  = e^{T_{ji}} 
    \bigg[
      1_{j<i} + \int_{cT_{ji}}^\infty e^{-(z+1/c)y}  w(y) \psi_{ij}(y)\, dy 
    \bigg],
    \qquad
    D_i = \sum_{k \in \NSet_m} M_{ik}(0),
  \]
  respectively, then:
  \begin{enumerate}
    \item If $\lambda$ is a root of $E$, then its complex conjugate $\lambda^*$ is
      also a root of $E$, and there exists a nonzero $\Phi \in \ker[
      D-M(\lambda)]$ such that $\Phi e^{\lambda x}, \Phi^* e^{\lambda^* x} \in
      \ker{L}$, where $L$ is defined as in \cref{lemma:linearVM}.
    \item $E$ has a root at $0$. \tw{m} is linearly stable (but not
      asymptotically linearly stable) to perturbations 
      $\phi \colon x \mapsto \kappa v$, where $\kappa \in \RSet \setminus \{0\}$ and $v=(1,\ldots,1) \in
      \RSet^m$.
     
    \item If $\lambda$ is a root of $E$ in $\DSet_{-\eta,0} \setminus \iunit \RSet$,
      then $\tw{m}$ is linearly asymptotically stable to perturbations $\Phi
      e^{\lambda x} + \Phi^* e^{\lambda^* x}$.
  \end{enumerate}
  \end{lemma}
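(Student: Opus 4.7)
The plan is to reduce the three parts to properties of the finite-dimensional matrix pencil $D-M(\lambda)$, obtained by substituting an exponential ansatz into the linearised equation $L\phi=0$ of \cref{lemma:linearVM}. Fix $\lambda \in \DSet_{-\eta,\eta}$ and $\Phi \in \CSet^m$, and set $\phi_j(x) := \Phi_j e^{\lambda x}$; the condition $|\real \lambda| \leq \eta$ guarantees $\phi \in C_\eta(\RSet,\CSet^m)$. Inserting the ansatz in the explicit expression of $L$, the differences $\phi_i(x) - \phi_j(x)$ and $\phi_i(x) - \phi_j(x-y)$ factor as $e^{\lambda x}(\Phi_i-\Phi_j)$ and $e^{\lambda x}(\Phi_i - \Phi_j e^{-\lambda y})$, respectively, so $L\phi=0$ collapses to a homogeneous algebraic system $A(\lambda)\Phi=0$ for some $\CSet^{m\times m}$ matrix $A(\lambda)$. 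An amplitude rescaling of the form $\Phi_j \to e^{T_j}\Phi_j$, which absorbs the offsets $T_{ji}$ appearing in the integration limits into the entries, identifies this system with $[D-M(\lambda)]\Phi=0$, where $D$ and $M(\lambda)$ are precisely those of \cref{eq:EDefinition}. Hence $E(\lambda)=0$ is equivalent to $L$ admitting a nontrivial exponential element $\Phi e^{\lambda x}$ in its kernel. The conjugation symmetry $E(z^*)=\overline{E(z)}$ follows because $w$, $p$, $c$ and the $T_{ji}$ are real, so $M(z^*)$ is the entrywise conjugate of $M(z)$ and $\det$ commutes with conjugation; in particular, if $\lambda$ is a root of $E$ so is $\lambda^*$, and since $L$ has real coefficients $\overline{\Phi e^{\lambda x}} = \Phi^* e^{\lambda^* x}$ lies in $\ker L$ as well. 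This delivers Part 1.

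For Part 2, the vector $v = (1,\ldots,1)$ is in $\ker L$ by direct inspection: both $\phi_i-\phi_j$ and $\phi_i-\phi_j(\blank -y)$ inside the operator vanish identically on $v$, so $Lv\equiv 0$. This is the infinitesimal form of time-translation invariance of \cref{prob:TWm}: replacing $(c,T)$ by $(c,T+\kappa v)$ produces the same travelling wave shifted in time, cf.\ \cref{prop:nu}. The equivalence from Part 1 (with $\lambda=0$) then gives $v \in \ker[D-M(0)]$, i.e.\ $E(0)=0$, consistently with the identity $D_i=\sum_k M_{ik}(0)$ built into the definition of $D$. For the perturbation $\phi=\kappa v$, one has $|\phi_i(x)|=|\kappa|$ for every $(i,x)$, so $\delta=\eps$ verifies the linear stability clause of \cref{def:linearStability}, while $|\phi_i(x)| \not\to 0$ as $x\to\infty$ for $\kappa\neq 0$, excluding asymptotic linear stability.

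For Part 3, given a root $\lambda \in \DSet_{-\eta,0}\setminus \iunit\RSet$, Part 1 furnishes a nonzero $\Phi \in \ker[D-M(\lambda)]$ such that $\phi(x) := \Phi e^{\lambda x} + \Phi^* e^{\lambda^* x}$ belongs to $\ker L$ and is real-valued. Because $\real\lambda<0$, each component satisfies $|\phi_i(x)| \leq 2|\Phi_i| e^{(\real\lambda)x}$, which tends to zero as $x\to\infty$, yielding the asymptotic linear stability. Linear stability itself follows from the fact that $\phi$ depends linearly on $\Phi$: scaling $\Phi$ by a small parameter simultaneously scales $\phi_i(-cT_i)$ and $\sup_{x>-cT_i}|\phi_i(x)|$ by the same factor, so the $\delta$--$\eps$ relation of \cref{def:linearStability} is attainable. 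The main obstacle, and really the only nontrivial step, is the algebraic reduction of Part 1: one must identify the correct rescaling of the amplitudes $\Phi$ that reorganises the coefficients of the linearised equation into the pencil $D-M(\lambda)$ advertised in \cref{eq:EDefinition}; once this step is settled, Parts 2 and 3 follow by inspection from sign and symmetry considerations.
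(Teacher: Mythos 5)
Your proposal follows essentially the same route as the paper's proof: substitute the exponential ansatz $\Phi e^{\lambda x}$ into $L\phi=0$ to collapse the linearised equation to the pencil $[D-M(\lambda)]\Phi=0$, use realness of $w$, $p$, $c$, $T$ for the conjugation symmetry $M(z^*)=M^*(z)$, use the row-sum identity $D_i=\sum_k M_{ik}(0)$ for the zero root and the constant kernel vector in Part 2, and use the sign of $\real\lambda$ for the decay in Part 3. The paper simply asserts the identity $(L\phi)(x)=e^{\lambda x}[D-M(\lambda)]\Phi$ rather than passing through your intermediate system $A(\lambda)\Phi=0$ and amplitude rescaling, and in Part 3 it makes the $\delta$--$\eps$ bookkeeping explicit via the constant $K=2\max_j e^{-cT_j\mu}$ (needed because $e^{\mu x}$ exceeds $1$ on the negative part of $[-cT_i,\infty)$), but these are presentational differences only.
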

%
  \begin{proof}
    \emph{Part 1}. We observe that $D$ has purely real entries, and a
    direct calculation shows $M(z^*) = M^*(z)$. If $E(\lambda)=0$, then there
    exists $\Phi \in \CSet^m \setminus \{0\}$ such that $D \Phi = M(\lambda) \Phi$, that
    is, $\Phi \in \ker[ D - M(\lambda)]$. Taking the complex conjugate we obtain
    $D \Phi^* = M^*(\lambda) \Phi^* = M(\lambda^*) \Phi^*$, hence $E(\lambda^*) = 0$,
    therefore $\lambda^*$ is also a root. 
    
    We now set $\phi = \Phi e^{\lambda x}$ which is in $C_\eta(\RSet,\CSet^m)$
    because $\lambda \in \DSet_{-\eta,\eta}$, and we obtain 
    \[
      (L\phi)(x) = e^{\lambda x} [D - M(\lambda)]\Phi =0
      \qquad x \in \RSet
    \]
    because $\Phi \in \ker[ D - M(\lambda) ]$. The previous identity implies $\Phi
    e^{\lambda x}, \Phi^* e^{\lambda^* x} \in \ker{L}$.

    \emph{Part 2}. 
    By definition of $D$ and $M$ we have $ [ D - M(0)]v=0 $, hence $v$ is in
    the kernel of $D-M(0)$ and
    $E(0) = 0$. We fix $\kappa \in \RSet \setminus \{0\}$, use part 1 with $\lambda = 0$,
    $\Phi = \kappa v$, and deduce that the mapping $\phi \colon x \mapsto
    \kappa v$, which is an element of $C_\eta(\RSet,\RSet^m)$, is in $\ker L$. Since
    $\phi_i(x) \equiv \kappa$, for all $i \in \NSet_m$, $\tw{m}$ is linearly stable
    according to \cref{def:linearStability}. However, $|\phi(x)|_\infty \to \kappa
    \neq 0$ as $x \to \infty$, so \tw{m} is not asymptotically linearly stable.

    \emph{Part 3}. Let $\lambda= \mu + \iunit \omega$. By main hypothesis $\mu <0$.
    From part 1 we deduce that there exists $\Phi \in
    \ker[D-M(\lambda)]$, such that $\phi(x) = \Phi e^{\lambda x} + \Phi^*
    e^{\lambda^* x} \in \ker L$. We note that $|\Phi|_\infty$ can be fixed to an
    arbitrary nonzero constant, and we bound $\phi_i$ as follows
    \begin{equation}\label{supp:eq:phiBound}
      |\phi_i(x)| \leq 2 | \Phi |_\infty e^{\mu x} \leq K | \Phi |_\infty, 
      \quad K = 2 \max_{j \in \NSet_m} e^{-cT_j \mu},
      \quad (i,x) \in \NSet_m \times [-cT_i,\infty). 
    \end{equation}
    We now fix $\eps>0$. The bound \cref{supp:eq:phiBound} and the choices $\delta = \eps$
    and $| \Phi |_\infty < \eps/K$ imply that \tw{m} is linearly stable, according to
    \cref{def:linearStability}. Using again \cref{supp:eq:phiBound} we obtain
    \[
      \lim_{x \to \infty} | \phi(x) |_{\infty}  
      \leq 
      2 |\Phi|_{\infty} \lim_{x \to \infty} e^{\mu x} = 0
    \]
    therefore \tw{m} is linearly asymptotically stable.
\end{proof}

\Cref{lem:twStability} provides a link between exponential perturbations to the
firing times of a \tw{m} and zeroes of the function $E$ in the strip
$\DSet_{-\eta,\eta} \subset \CSet$. The function $E$ depends on $(c,T)$ via the
entries of the matrices $D, M$, and can be evaluated numerically at each point
$z \in \DSet_{-\eta,\eta}$. 

In PDEs, linear stability of a travelling wave is determined by the spectrum of a
linear operator, which contains a $0$ eigenvalue corresponding to a translational
perturbation mode. 
Part 2 of \Cref{lem:twStability} provides an analogous result
for a \tw{m}, which is linearly stable, but not asymptotically linearly stable
(therefore neutrally stable), to perturbations that shift the firing functions
homogeneously. Part 3 of \Cref{lem:twStability} suggests
that a \tw{m} is stable if all nonzero roots of $E$ have strictly negative real
parts. Initial guesses for the roots can be obtained by plotting $0$-level sets of
the function $E$, for fixed $(c,T)$.


\section{Two-parameter continuation of bifurcations}
\label{sec:supp:TwoParameterContinuation}
Grazing points are found generically as a secondary control parameter, say
$\gamma$, is varied. It is possible to perform a $2$-parameter continuation
of the grazing point in the $(\beta,\gamma)$-plane by continuing in
$\gamma$ solutions the following problem:
\begin{problem}[Grazing point computation]\label{prob:G}
  Find $(c,T_1,\ldots,T_m,T_{G},\beta_{G}) \in \RSet_{>0} \times \RSet^{m+2}$
  such that  $T_1< \cdots < T_m < T_{G}$ and
  \begin{align}
    & T_1 = 0, \\
    & \nu([ cT_i ]_-; c, T_1, \ldots, T_m,\beta_G) = 1, \quad i \in \NSet_m,\\
    & \nu(cT_G;c, T_1, \ldots, T_m,\beta_G) = 1,\\
    & \nu'(cT_G; c, T_1, \ldots, T_m,\beta_G) = 0.
  \end{align}
\end{problem}
We note that we have exposed the dependence of $\nu$ and $\nu'$ on $\beta$ in the
previous problem. Unlike the numerical continuation presented in the main text, here
$\beta$ is a free parameter, which is determined also by Newton's method. 

Similarly, we can trace loci of Hopf bifurcations in the $(\gamma, \beta)$-plane, by
solving
\begin{problem}[Computation of Hopf bifurcations]\label{prob:Hopf}
  Find $(c,T_1,\ldots,T_m,\beta_\textrm{HB},\omega_\textrm{HB})  \in
  \RSet_{> 0} \times \RSet^{m+2}$ such that $T_1< \cdots < T_m$ and
  \begin{align}
    & T_1 = 0, \\
    & \nu([ cT_i ]_-; c, T_1, \ldots, T_m,\beta_\textrm{HB}) = 1, \quad i \in \NSet_m,\\
    & E(i\omega_\textrm{HB};c,T_1,\ldots,T_m,\beta_\textrm{HB})= 0.
  \end{align}
\end{problem}

\begin{figure} \label{fig:sup:purelyexcitatory_bif_profiles}
  \centering
  \includegraphics{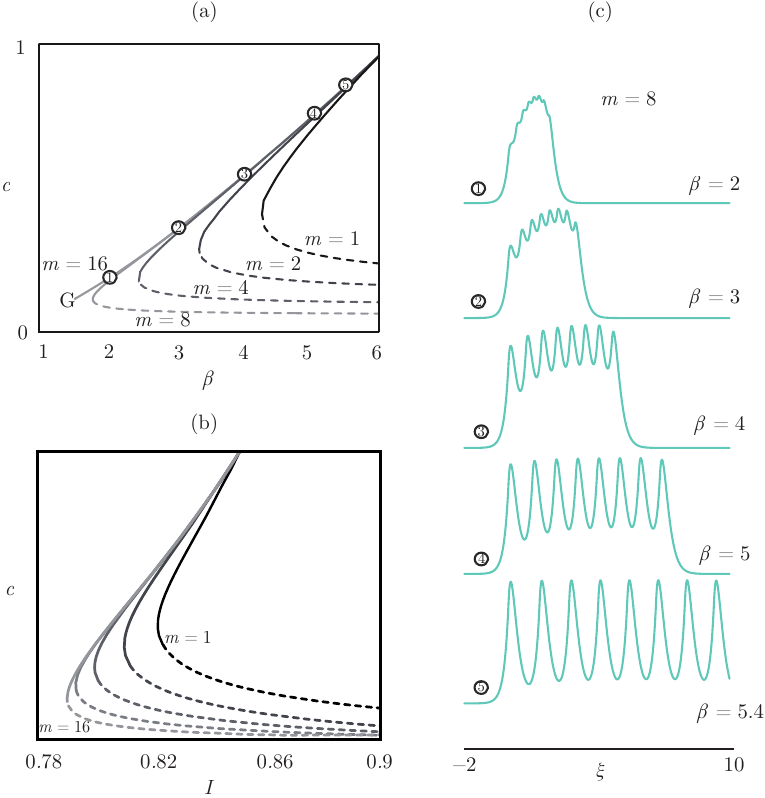}
 \caption{Bifurcation diagram showing wave speed branches $c$ for \tw{m} states
 with $m = 1,2,4,8,16$, with purely excitatory kernel, $a_1=2$, $b_1=5$, $a_2=0$.
 (a) External current is fixed at $I=0.82$ and solutions
 continued in $\beta$, here $m = 1, 2, 4, 8$ branches switch stability at a
 fold, but for $m = 16$ the branch terminates at a grazing point as seen in the
 lateral-inhibition case previously. (b) The synaptic processing rate is fixed at
 $\beta = 4$ and the waves continued in $I$. Note that in both cases a greater $m$ leads to
 larger speeds. (c) Synaptic profiles for \tw{8} solutions labelled (1--5) in (a).
 Smaller $\beta$ values result in more localised profiles.}
\end{figure}

\bibliography{references}
\bibliographystyle{siamplain}

\end{document}